\numberwithin{equation}{section}
\newtheorem{thm}{Theorem}[section]
\newtheorem{prop}[thm]{Proposition}
\newtheorem{cor}[thm]{Corollary}
\newtheorem{lem}[thm]{Lemma}
\theoremstyle{remark}
\newtheorem{rmk}[thm]{Remark}
\theoremstyle{definition}
\newtheorem{defn}{Definition}[section]
\newtheorem{exmp}{Example}[section]
\DeclareMathOperator{\N}{\mathbb{N}}
\DeclareMathOperator{\R}{\mathbb{R}}
\DeclareMathOperator{\cF}{\mathcal{F}}
\begin{document}
\title{Preduals and double preduals of some Banach spaces}

\author[L. D'Onofrio]{Luigi D'Onofrio}
\address{Dipartimento di Scienze e Tecnologie, Universit\'a di Napoli \lq\lq Parthenope \rq\rq}
\email{donofrio@uniparthenope.it}

\author[G. Manzo]{Gianluigi Manzo}
\address{Dipartimento di Matematica e Applicazioni  ``R. Caccioppoli", Universit\`a degli Studi di Napoli Fe\-de\-rico II, via Cintia, 80126 Napoli, ITALY}
\email{gianluigi.manzo@unina.it}

\author[C. Sbordone]{Carlo Sbordone}
\address{Dipartimento di Matematica e Applicazioni ``R. Caccioppoli", Universit\`a degli Studi di Napoli Fe\-de\-rico II, via Cintia, 80126 Napoli, ITALY}
\email{sbordone@unina.it}

\author[R. Schiattarella]{Roberta Schiattarella}
\address{Dipartimento di Matematica e Applicazioni  ``R. Caccioppoli", Universit\`a degli Studi di Napoli Fe\-de\-rico II, via Cintia, 80126 Napoli, ITALY}
\email{roberta.schiattarella@unina.it}

\subjclass[2010]{46B04, 46B25}
\begin{abstract}
In this paper we study ways to establish when a Banach space can be identified as the dual or the double dual of another Banach space. To obtain these results, we relate these spaces with other, concrete Banach spaces - tipically $\ell^1$ and $\ell^\infty$ - and show that under suitable assumptions we can transfer properties of these spaces to the space we consider. In particular, we show how these results can be used to obtain in a simple way interesting results about spaces such as $BMO$ and $BV$.
\end{abstract}\maketitle
\begin{center}
Dedicated to the memory of Professor E. Vesentini.
\end{center}

\section{Introduction}
A Banach space $E$ is \emph{a dual space} if there exists a Banach space $E_*$ (a predual of $E$) such that $(E_*)^*$ is $E$. We mean that  $E_*$ is an isometric predual:  $$(E_*)^*\cong E,$$ where $\cong$ denotes isometric isomorphisms. We will denote isomorphism of Banach spaces by $\simeq$.\\ 
When $E$ is a dual space, we can define the weak-star topology, and the Banach-Alaoglu-Bourbaki Theorem says that the unit ball $\mathbb B_E:=\{x\in E:\|x\|_E\le 1\}$ of $E$ is weak-star compact.\\
This fact can be reversed, that is, assume that there exists a subset $\cF$  of $E^*$ that separates the points of $E$ and $\mathbb  B_E$ is $\sigma(E,\cF)$-compact (where $\sigma(E,\cF)$ is the coarsest topology on $E$ such that the elements of $\cF$ are continuous), then  the closed linear span of $\cF$
$$
cl_{E^{\star}}(span \cF)
$$
 defines a Banach space $E_*$ which is an isometric predual of $E$; in particular, the norm of $x\in E$ as a functional on $E_*$ is equal to its norm as an element of $E$
\begin{equation}\label{2}
\| x\|_{E}= \sup_{\| \varphi\|_{E_*}\leq 1} |\langle  x, \varphi \rangle|.
\end{equation}
(see \cite{kaijser1978note}, \cite{dixmier1948theoreme}).

This approach which relies on the existence of suitable $\cF\subset E^{*}$ is natural once we observe that any predual $E_*$ of a $E$ should be  viewed as a suitable closed subspace of the dual space $E^*$ (see \cite{Rossi}).
Theorem \ref{main} characterizes duals  $E$ of \emph{separable} spaces by simply requiring that $\mathcal F \subset E^{*}$ is \emph{countable}. In Section 4 we will complement this result with an explicit description of elements of $E_*$ by atomic decomposition. 

Let us now recall that for any subspace $U$ of a Banach space $Z$ we canonically have $U^*\cong Z^*/U^\perp$ and $(Z/U)^*\cong U^\perp$, where $U^{\perp}$ is the annihilator of $U$ in $Z^{*}$ (see for instance \cite{laxFunctional}, Exercise 2 and 3, pag. 76-77) . Theorem \ref{predualquotient} says that for any predual $P$ of $E$ there exists a Banach space $Z$, a continuous embedding $V: E\rightarrow Z^{*}$ with $E\cong V(E)$ weak star closed, such that $P\cong Z/\, ^{\perp}( V(E))$, i.e.
$$
\left(Z/\, ^{\perp} (V(E))\right)^{*} \cong E,
$$
where $^{\perp} V(E)$ is the subset of $Z$ of elements annihilating $V(E)$.

Two important examples, that we will consider in greater detail later, are $E=BMO(Q_0)$ the space of functions with bounded mean oscillation and $E=BV(Q_0)$ the space of functions of bounded variation ($Q_0=[0,1]^d\subset\R^d$). \\
To prove that $E$ is a dual space,  we denote by $I$ a set of indices with the cardinality $\kappa$ of the unit ball of $E_{*}$, $\mathbb B_{E_{*}}=(\psi_i)_{i\in I}$ and consider the space
	\begin{equation*}
	\ell^1_\kappa=\left\{\xi=(\xi_i)_{i\in I}\in I^{\R}:\|\xi\|_{\ell^1_\kappa}=\sum_{i\in I}|\xi_i|<\infty\right\}
	\end{equation*}
	whose dual is
	\begin{equation*}
	\ell^\infty_\kappa=\left\{\xi=(\xi_i)_{i\in I}\in I^{\R}:\|\xi\|_{\ell^\infty_\kappa}=\sup_{i\in I}|\xi_i|<\infty\right\}.
	\end{equation*}
Consider the continuous embedding
$$
V:E\rightarrow \ell^{\infty}_\kappa
$$
defined for $x\in E$ by
	\begin{equation*}
		V(x)= \{\langle \psi_i, x\rangle_{E}\}_{i\in I}\in\ell^\infty_\kappa
	\end{equation*}
	then, $E$ has an isometric predual, $$E_*= \ell^1_\kappa/^\perp (V(E))$$
	(see Theorem \ref{predualquotient}).\\
	Another interesting question is to see when $E_*$ has a predual; in this case, we will say that $E$ has \emph{double predual}. Let $\cF=\left\{  f_n \right\}_{n\in \mathbb N}\subset E^*$ be a countable, norming subset for $E$, thus $E$ is a Banach space under the norm
	$$
	\| x\|_{E}= \sup_{n\in \mathbb N} |\langle f_n, x  \rangle_{E}|.
	$$
	Now, we fix an isometric predual $C$ of $\ell^1$, hence $C\subset \ell^\infty$, and consider the isometric embedding
	$$
V:E\rightarrow \ell^{\infty}, \,\,V x(n)=\langle f_n, x \rangle_E
$$
and the set
$$
E_C=\left\{x\in E:V(x)\in C \right\}
$$
then, $(E_C)^{*}\subset E_{*}$ (see Remark \ref{inclusion}) and by Theorem \ref{duality} we have the isometric isomorphism
$$
(E_C)^{*} \cong E_{*}
$$
if (and only if) the (AP) condition
\begin{equation}\tag{AP}
		\forall x\in E,\;\;\exists\{y_j\}_{j\in\N}\subset E_C:y_j\to x \text{ in } \sigma(E, \cF),\quad\sup_{j\in\N}\|y_j\|_E<+\infty.
	\end{equation}
holds true.\\

Notice that for $E=BMO$, we have $E_{*}=\mathcal H^1$ the Hardy space, and for a suitable $C$, which can be obtained from Section \ref{preduals}, we recover $E_C=VMO$ and $BMO$ is double dual of $VMO$, where $VMO$ is the space of functions of vanishing mean oscillation.\\
 In case $E=BV(Q_0)$, $E_{*}$ is isomorphic to $$\left\{\varphi+\mathrm{div}\phi:\varphi\in C_0(Q_0),\phi\in C_0(Q_0;\R^d)\right\}$$
  (see \cite{AmbrosioFP}, \cite{FUSCO20181370}), but it is not possible to find $C$ such that $
(E_C)^{*} \cong E_{*}$, at least in the case that $C$ is M-embedded (see Section 2 and Theorem \ref{notunique}).\\ 
 
Another interesting case is the space $B$ of Bourgain--Brezis--Mironescu introduced in \cite{bourgain2015new} (see also \cite{ABBF}) to obtain a more general space than $VMO$ having the property that all integer valued functions are actually constant. This space is similar in nature to $BMO$, and enjoys similar properties.

\section*{Acknowledgments}
\noindent
The authors are members of the Gruppo Nazionale per l'Analisi Matematica, la Probabilit\`a e le loro Applicazioni (GNAMPA) of the Istituto Nazionale di Alta Matematica (INdAM). The research of L. D. has been funded by \lq\lq~Sostegno alla Ricerca Locale~\rq\rq , Universit\'a degli studi di Napoli \lq\lq~Parthenope~\rq\rq. The research of R.S. has been funded by PRIN Project 2017JFFHSH.

\section{Preliminaries}
We fix some notation.

For a Banach space $E$, we say that a subset $\mathcal F\subset E^*$ is \textit{a norming set } for $E$ if 
\begin{equation}\label{eq:Fnorming}
\|x\|_E=\sup_{f\in \mathcal F}|\langle f,x\rangle_E|
\end{equation}
 for all $ x\in E$.
Let $E$ be a Banach space; consider the following sequence spaces.

Let
$$
\ell^p(E)= \left\{(a_n): \mathbb N \to E: \sum_{n=1}^{\infty} |a_n|^p< \infty   \right\}
$$
where $1\leq p<\infty$. Then $\ell^p$ is  a vector space on which the standard norm is defined by
$$
\| (a_n)\|_{\ell^p(E)}= \left(\sum_{n=1}^{\infty} |a_n|^p \right)^\frac{1}{p}.
$$

Let 
$\ell^\infty(E)$ be the vector space of all bounded sequences in $E$ equipped  with the following norm:
$$
\|(a_n)\|_{\ell^\infty(E)}= \sup_{n\in \mathbb N} |a_n|.
$$

An important linear subspace of $\ell^\infty(E)$ is the space $c_0(E)$ 
of sequences having norm converging to zero. $(c_0(E), \|\cdot \|_{\ell^\infty(E)})$ is a normed vector space.

 The space of bounded finitely additive functions on $\mathbb N$ is denoted by $\text{ba}(\mathbb N)$ \cite{yosida1952finitely}:
 
 \begin{equation*}
 	ba(\N)=\left\{m:\mathcal{P}(\N)\to\R\mid\forall A,B\subset\N,A\cap B=\emptyset, m(A\cup B)=m(A)+m(B)\right\}.
 \end{equation*}

It is a known result of the theory of Banach spaces that if $U$ is a subspace of the Banach space $Z$ then there is a natural isomorphism between $(Z/U)^*$ and the annihilator $U^\perp$ of $U$ in $Z^*$,
$$
U^\perp = \left\{ f\in Z^*: \langle f, u\rangle =0\,\, \,\, \forall u \in U  \right\},
$$
which is given by the map that sends $f\in U^\perp$ into $\tilde{f}:z+U\in Z/U\mapsto f(z)$.\\
There is also a natural isomorphism between $U^*$ and $Z^*/(U^\perp)$, which is obtained similarly.\\
If $Y$ is a subspace of $Z^*$, we denote by $^\perp Y$ the subset of $Z$ of elements annihilating $Y$.

\begin{defn}\cite{harmand2006m}
   A Banach space $E$ is said to be \emph{$M$-embedded} if the natural decomposition
   $$
   E^{***}=E^* \oplus_1 E^\perp
   $$
   is an $\ell^1$ direct sum.
   
   A Banach space $E$ is said to be \emph{$L$-embedded} if there exists a closed subspace $F\subset E^{**}$ such that 
   $$
   E^{**}=E \oplus_1 F
   $$
   is an $\ell^1$ direct sum.
\end{defn}

 
\section{Results}\label{maintheory}

\begin{thm}\label{predualquotient}
	Let $E$ be a Banach space. Then $E$ is the isometric dual of a Banach space $E_*$ if and only if there exists a Banach space $Z$ and a continuous embedding $V:E\to Z^*$ such that $V$ is an isometric isomorphism between $E$ and $V(E)$ and $V(E)$ is weak-star closed in $Z^*$. In this case, the space $E_*=Z/(^\perp V(E))$ is such an isometric predual.
\end{thm}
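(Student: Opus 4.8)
The plan is to prove both directions of the equivalence, with the forward direction being essentially a packaging of the Banach--Alaoglu machinery and the reverse direction following from the quotient/annihilator duality recalled in the Preliminaries.

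\medskip

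\emph{Necessity.} Suppose $E \cong (E_*)^*$ for some Banach space $E_*$. The natural candidate is $Z = E_*$ and $V$ the canonical identification of $E$ with $(E_*)^*$. Under this identification, the weak-star topology $\sigma((E_*)^*, E_*)$ is precisely the topology making the evaluation functionals continuous, and the whole of $(E_*)^*$ is trivially weak-star closed in itself. Since $V$ is the isometric isomorphism realizing $E \cong (E_*)^*$, we have $E \cong V(E) = Z^*$, which is weak-star closed. So this direction is almost immediate once we take $Z = E_*$; the only thing to verify is that $V$ is an isometric embedding into $Z^*$, which holds by the definition of isometric predual.

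\medskip

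\emph{Sufficiency.} This is the substantive direction. Assume we are given $Z$ and $V: E \to Z^*$ as stated. Set $U := {}^\perp V(E) \subset Z$, the pre-annihilator, and define $E_* := Z/U$. By the duality recalled in the Preliminaries, $(Z/U)^* \cong U^\perp$, where $U^\perp \subset Z^{**}$. The key point is that because $V(E)$ is \emph{weak-star closed} in $Z^*$, the bipolar theorem gives $({}^\perp V(E))^\perp = V(E)$; that is, $U^\perp$, viewed inside $Z^*$ rather than $Z^{**}$, equals exactly $V(E)$. Hence
$$
(E_*)^* = (Z/U)^* \cong U^\perp = V(E) \cong E,
$$
the last isometric isomorphism being the hypothesis that $V$ is an isometry onto its image. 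Chaining these identifications yields $(E_*)^* \cong E$.

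\medskip

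The main obstacle, and the step deserving the most care, is the use of weak-star closedness to identify $U^\perp$ with $V(E)$ rather than with its bidual closure. Concretely, one must check that the canonical isomorphism $(Z/U)^* \cong U^\perp$ lands inside the copy of $Z^*$ sitting in $Z^{**}$, and that it recovers $V(E)$ exactly; this is where the bipolar theorem for the dual pair $(Z, Z^*)$ is invoked, and it is precisely the hypothesis $\overline{V(E)}^{\,w^*} = V(E)$ that licenses the conclusion. A secondary point is to confirm that all these isomorphisms are \emph{isometric}, not merely topological: the quotient norm on $Z/U$ is dual to the norm that $V(E)$ inherits from $Z^*$, and since $V$ is an isometry, this matches $\|\cdot\|_E$, reproducing the norming formula \eqref{2}.
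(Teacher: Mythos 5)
Your proof is correct, and your sufficiency argument coincides with the paper's: setting $U={}^\perp V(E)$, the quotient duality $(Z/U)^*\cong U^\perp$ plus the bipolar theorem --- it is exactly the weak-star closedness hypothesis that gives $({}^\perp V(E))^\perp=V(E)$ --- yields $(Z/U)^*\cong V(E)\cong E$ isometrically. One clarification: since $U$ is a subspace of $Z$, its annihilator $U^\perp$ is by definition a subspace of $Z^*$ (this is precisely how the Preliminaries state the duality $(Z/U)^*\cong U^\perp$), so the ``obstacle'' you describe of checking that $U^\perp$ lands inside the copy of $Z^*$ sitting in $Z^{**}$ does not exist; the only place weak-star closedness enters is the bipolar identity itself. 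Where you genuinely diverge from the paper is the necessity direction: you take $Z=E_*$ and $V$ the canonical identification, so that $V(E)=Z^*$ is trivially weak-star closed --- valid, and essentially immediate. The paper instead enumerates $\mathbb{B}_{E_*}=\{\psi_i\}_{i\in I}$ with $|I|=\kappa$, embeds $E$ into $\ell^\infty_\kappa$ by $V(x)=\{\langle\psi_i,x\rangle_E\}_{i\in I}$, proves weak-star closedness of $V(E)$ via the Krein--Smulian theorem combined with the $\sigma(E,E_*)$-compactness of $\mathbb{B}_E$, and then identifies the given predual $E_*$ with $\ell^1_\kappa/{}^\perp V(E)$ by noting that $\psi_i$ corresponds to the class of $\delta^i$. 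That longer route buys the concrete universal picture advertised in the introduction --- every dual space embeds weak-star closedly into some $\ell^\infty_\kappa$, and every predual of it is realized as a quotient of $\ell^1_\kappa$ --- which is what Theorem \ref{main} and the atomic decompositions of Section \ref{atomic} exploit; your choice proves the stated equivalence more economically but does not produce that representation.
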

\begin{proof}
	Assume there is a map $V$. Then we have that $(^\perp V(E))^\perp=V(E)$, so that $\left(Z/(^\perp V(E))\right)^*\cong V(E)\cong E$.\\
	Conversely, let $E_*$ be an isometric predual of $E$.\\
	We now take $\kappa$ equal to the cardinality of $\mathbb B_{E_*}=\{\psi_i\}_{i\in I}$ and define the map
	\begin{equation*}
	V: x\in E\mapsto \{\langle \psi_i,x\rangle_{E}\}_{i\in I}\in\ell^\infty_\kappa.
	\end{equation*}
	The map $V$ is obviously an isomorphism, and it is possible to show that $V(E)$ is weak-star closed in $\ell^\infty_\kappa$. By the Krein-Smulian theorem, it is enough to show that $V(E)\cap \mathbb B_{\ell^\infty_\kappa}$ is weak-star closed.\\
	Let $\{x_\lambda\}_{\lambda}$ be a net in the unit ball of $E$, $\mathbb B_E$, such that $V(x_\lambda)$ converges weak star to $\xi=\{\xi_i\}_{i\in I}\in\ell^\infty_\kappa$. Since $\mathbb B_E$ is $\sigma(E,E_*)$-compact, we can assume without loss of generality that $\langle \psi_i,x_\lambda\rangle_E\to \langle \psi_i,x\rangle_E\quad\forall i\in I$ for some $x\in \mathbb B_E$. But since $\langle\xi,\delta^i\rangle_{\ell^1_\kappa}=\xi_i$, where $\delta^i$ is the element of $\ell^1_\kappa$ equal to $1$ at index $i$ and $0$ elsewhere, we have that $\langle \psi_i,x_\lambda\rangle_E\to\xi_i$ for all $i\in I$, so $\xi_i=\langle \psi_i,x\rangle_E$, i.e. $\xi=V(x)$, which proves our claim.\\
	To end the proof, we should prove that 
	$$E_*= \ell^1_\kappa/^\perp V(E).$$
	
	We notice that $\langle \psi_i,x\rangle_E=\langle \delta_i,V(x)\rangle_{\ell^\infty}$, so that $\psi_i$ coincides with the equivalence class of $\delta_i$, and since $\ell^1_\kappa/^\perp V(E)=\text{cl}_{E^*}(\text{span}\{[\delta^i]\}_{i\in I})$ we prove this result as well.
\end{proof}
We use previous result to ricognize that the space
\begin{equation}\label{BVexample0}
BV(Q_0)=\left\{u\in L^1(Q_0):V[u]<\infty\right\},
\end{equation}
where 
$$
V[u]:=\sup\left\{\displaystyle\int_{Q_0}u\,\mathrm{div}\varphi\, dx:\varphi\in C_0^\infty(Q_0),\|\varphi\|_\infty\le 1\right\}
$$
endowed with the norm $\|u\|_{BV}:=\|u\|_{L^1}+V[u]$ is a dual.\\
Choose 
$$
Z= C_0(Q_0)^{d+1}
$$
we have
$$
Z^*=\left\{ \mu= (\mu_0, \mu_1, \cdots, \mu_d): \mu_j \in \mathcal M(Q_0) \right\}
$$
where $\mathcal M(Q_0)$ denotes the space of Radon measures on $Q_0$.\par
Let us also consider the subspace $Y$ of $Z$ which is the closure of the subspace
$$
T=\left\{ \phi \in Z: \phi= (\phi_0, \phi_1, \cdots \phi_d) \in C^\infty_0 (Q_0)^d: \phi_0= \sum_{i=1}^d \frac{\partial \phi_i}{\partial x_i}\right\}.
$$
Consider the map
$$
V: u\in E \to \left( u \mathcal L^d, D_1u, \cdots, D_d u \right)\in Z^*
$$
and notice that (see \cite{AmbrosioFP} Remark 3.12)
$$
\| u\|_{BV} \leq 2 \| Vu\|_{Z^*} \leq 2 \| u\|_{BV}.
$$
Moreover
$$
V(E) \text{ is weakly closed in } Z^*.
$$
Hence, by Theorem \ref{predualquotient}, we obtain that 
$BV$ is dual:
\begin{equation}\label{BVexample1}
BV\cong \left( Z/ ^\perp V(E)\right)^*.
\end{equation}
Actually $BV_*$ is a separable space.\\

It is possible to show that we can replace in some sense $\mathbb B_{E_*}$ with $\cF$ in the proof of  previous theorem,  as long as $\mathbb B_E$ is compact with respect to the topology $\sigma(E,\cF)$ \cite{dixmier1948theoreme,kaijser1978note}.\\
In case $\cF=\{f_n\}_{n\in\N}$ is a sequence, the resulting predual $E_*$ is separable. We now show that the converse is true, and in particular it can be used to characterize duals of separable spaces.
\begin{thm}\label{main}
	$E$ is the dual of a separable space if and only if there exists a countable, norming subset $\cF\subset E^*$ such that $\mathbb B_E$ is $\sigma(E,\cF)$-compact.
\end{thm}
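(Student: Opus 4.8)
The plan is to prove the two implications separately. The implication ``$\Leftarrow$'' is a countable version of the construction in Theorem~\ref{predualquotient}, while ``$\Rightarrow$'' is a purely topological compactness-transfer argument.

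\emph{Sufficiency.} Suppose $\cF=\{f_n\}_{n\in\N}\subset E^*$ is countable, norming, and $\mathbb B_E$ is $\sigma(E,\cF)$-compact. I would first note that a norming set automatically separates the points of $E$: if $x\neq 0$ then $\|x\|_E=\sup_n|\langle f_n,x\rangle_E|>0$, so some $f_n$ does not annihilate $x$. I would then run the proof of Theorem~\ref{predualquotient} with the \emph{countable} index set $\N$ in place of $\mathbb B_{E_*}$: take $Z=\ell^1$ and define $V:E\to\ell^\infty=(\ell^1)^*$ by $Vx=(\langle f_n,x\rangle_E)_{n\in\N}$. Since $\cF$ is norming, $\|Vx\|_{\ell^\infty}=\|x\|_E$, so $V$ is an isometry onto its image. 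Exactly as in Theorem~\ref{predualquotient}, the Krein--Smulian theorem reduces weak-star closedness of $V(E)$ to that of $V(E)\cap\mathbb B_{\ell^\infty}$; I would verify the latter by taking a net in $\mathbb B_E$ whose image converges weak star to some $\xi\in\ell^\infty$ and, using $\sigma(E,\cF)$-compactness of $\mathbb B_E$, extracting a limit $x\in\mathbb B_E$ with $\langle f_n,x\rangle_E=\xi_n$ for every $n$, i.e. $V(x)=\xi$. Theorem~\ref{predualquotient} then gives $E\cong(\ell^1/\,^\perp V(E))^*$; since $\ell^1$ is separable and quotients of separable normed spaces are separable, $E_*=\ell^1/\,^\perp V(E)$ is a separable predual. (Alternatively, one may invoke directly the reversed Banach--Alaoglu theorem recalled in the introduction \cite{dixmier1948theoreme,kaijser1978note}, for which $E_*=\mathrm{cl}_{E^*}(\mathrm{span}\,\cF)$ is separable precisely because $\cF$ is countable.)

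\emph{Necessity.} Suppose $E\cong(E_*)^*$ with $E_*$ separable, and regard $E_*\subset E^*$ through the canonical embedding. I would choose a countable set $\cF=\{\psi_n\}_{n\in\N}$ dense in the unit ball $\mathbb B_{E_*}$. By density and continuity of each evaluation $\psi\mapsto\langle\psi,x\rangle_E$ one gets $\sup_n|\langle\psi_n,x\rangle_E|=\sup_{\psi\in\mathbb B_{E_*}}|\langle\psi,x\rangle_E|=\|x\|_E$, so $\cF$ is norming, hence separating. It remains to transfer compactness. As $\cF\subset E_*$, the topology $\sigma(E,\cF)$ is coarser than the weak-star topology $\sigma(E,E_*)$, under which $\mathbb B_E$ is compact by Banach--Alaoglu--Bourbaki; and since $\cF$ separates points, $\sigma(E,\cF)$ is Hausdorff. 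Hence the identity map $(\mathbb B_E,\sigma(E,E_*))\to(\mathbb B_E,\sigma(E,\cF))$ is a continuous bijection from a compact space onto a Hausdorff space, therefore a homeomorphism, and $\mathbb B_E$ is $\sigma(E,\cF)$-compact.

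The only step requiring genuine care is the weak-star closedness of $V(E)$ in the sufficiency part, carried over from Theorem~\ref{predualquotient}: this is exactly where the hypothesis that $\mathbb B_E$ is $\sigma(E,\cF)$-compact (and not merely compact for some larger predual) is used, and one must check that the net limit lands in $V(E)$ itself rather than only in its weak-star closure. Everything else is soft -- the compact-to-Hausdorff homeomorphism disposes of the necessity direction with no estimate, and the passage countable $\cF$ $\leftrightarrow$ separable $E_*$ is immediate in both directions. So the essential content is just the observation that Theorem~\ref{predualquotient} remains valid, with a separable conclusion, when the full unit ball $\mathbb B_{E_*}$ is replaced by a countable norming family.
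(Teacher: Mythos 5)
Your proof is correct and follows essentially the same route as the paper: the sufficiency direction is exactly the paper's construction (the isometric embedding $V:E\to\ell^\infty$, Krein--Smulian plus $\sigma(E,\cF)$-compactness to get weak-star closedness of $V(E)$, and the separable quotient $\ell^1/\,^\perp V(E)$ as predual), while the necessity direction matches the paper's choice of a countable dense family in the predual, with your homeomorphism argument merely spelling out the compactness transfer the paper leaves implicit. No gaps.
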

\begin{proof}
	Let us assume that $E$ is the dual of a separable space $E_*$, let $D$ be a dense countable set in $E_*$ and take $\cF:=\left\{f\in E^*\mid\exists z\in D\backslash\{0\}:f=\displaystyle\frac{z}{\|z\|_{E_*}}\right\}$. $\cF$ is obviously countable, norming and $\mathbb B_E$ is $\sigma(E,\cF)$-compact because it is weak-star compact.
	
	To prove that $E$ is the dual of a separable space, let $\mathcal F$ be a norming sequence $\{f_n\}_n$ of elements of $E^*$ and $V:x\in E\mapsto \{\langle f_n,x\rangle\}_{n\in\N}\in\ell^\infty$.  The fact that $V$ is an isometry is trivial by \eqref{eq:Fnorming}.
	
	Recall that $V(E)^{\perp}$ is the subspace of $\mathrm{ba}(\N)$ of elements annihilating $V(E)$:
$$V(E)^{\perp}=\{m\in\mathrm{ba}(\N):\langle m,V(x)\rangle_{\ell^\infty}=0\;\;\,\forall x\in E\}.$$
Since the Banach space $\ell^1$ is naturally understood as the subspace of $\text{ba}(\mathbb N)$ consisting of the countably additive measures, we denote by  $^\perp V(E)$ the subset of $\ell^1$ of elements annihilating $V(E)$:
$$
^\perp V(E)=\{\xi\in\ell^1:\langle V(x),\xi\rangle_{\ell^1}=0\;\; \forall x\in E\}=V(E)^\perp\cap\ell^1.
$$
	
	 We show that $E$ has an isometric predual $E_*$
	$$E_*= \ell^1/^\perp V(E).$$
	As before, we can prove that $V(E)$ is weak-star closed in $\ell^\infty$, and that
	$$E_*= \ell^1/^\perp V(E)\cong  \text{cl}_{E^\star}\left(\text{span}\mathcal F  \right).$$
	
\end{proof}

If we take $E=BMO(Q_0)$, the space of functions with bounded mean oscillation (see \cite{john1961functions}), where $Q_0$ is the unit cube in $\R^d$, we know that its separable predual is the Hardy space $\mathcal H^1$. See Example \ref{BMO} for details. As a consequence, one can construct a norming set $\cF$ for $BMO(Q_0)$ based on the atomic decomposition of $\mathcal H^1$ (see Section \ref{atomic}). An alternative construction is described in Section \ref{examples}, along with details on $BMO(Q_0)$ and $B(Q_0)$.

Our aim is now to give conditions under which $E_*$ \emph{ is dual of an another Banach space.}

Notice that in the case of Example \ref{BMO}, $E_*$ is also dual of the separable Banach space $VMO$, the space of functions with vanishing mean oscillation (see \cite{sarason1975functions}).

From now on, with $\cF=\left\{ f_n\right\}_{n\in \mathbb N}\subset E^*$ we will denote a countable, norming subset  for $E$ generating a separable predual $E_*$ of $E$.\\
Let 
$$
V: x\in E \to \left\{\langle f_n, x\rangle_{E} \right\}_{n\in \mathbb N}\in \ell^\infty
$$
the continuous embedding of $E$ and let us consider an isometric predual $C$ of $\ell^1$ and define $E_C(\cF)=E_C=\{x\in E:V(x)\in C\}$. 
We want to show that under an appropriate condition we have that $(E_C)^*\cong E_*$.
\begin{thm}\label{duality}
Let $E$ be a Banach space and $E_*$ its predual $E_*\simeq cl_{E^{\star}}(span \cF)$. Fix $C$ an isometric predual of $\ell^1$. Then, the following condition holds:
	\begin{equation}\tag{AP}
		\forall x\in E,\;\;\exists\{y_j\}_{j\in\N}\subset E_C:y_j\to x\,\, \text{ in } \sigma( E, \mathcal F),\quad\sup_{j\in\N}\|y_j\|_E<+\infty.
	\end{equation}
	if, and only if, $(E_C)^*\cong E_*$ via the duality pairing $\langle\psi,y\rangle_{E_C}=\langle y,\psi\rangle_{E_*}$.
\end{thm}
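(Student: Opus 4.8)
\noindent The plan is to transport the whole statement into the concrete duality $\langle \ell^1,\ell^\infty\rangle$ through the isometric embedding $V$, and to reduce the claimed isometric isomorphism to an equality of two annihilators in $\ell^1$. Since $V$ is an isometry and the canonical embedding $C\hookrightarrow C^{**}=\ell^\infty$ is isometric, the restriction of $V$ identifies $E_C$ isometrically with the closed subspace $W:=V(E_C)=V(E)\cap C$ of $C$. Hence $(E_C)^*\cong W^*\cong\ell^1/W^\perp$, where $W^\perp=\{\xi\in\ell^1:\sum_n\xi_n w_n=0\ \forall w\in W\}$ is the annihilator of $W$ in $C^*=\ell^1$, while Theorem \ref{main} gives $E_*\cong\ell^1/{}^\perp V(E)$. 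As $W\subseteq V(E)$ we always have ${}^\perp V(E)\subseteq W^\perp$, so the canonical quotient map $\ell^1/{}^\perp V(E)\to\ell^1/W^\perp$ is a well-defined surjection; unwinding the definitions, $[\xi]\mapsto(y\mapsto\langle V(y),\xi\rangle)$ is precisely the map induced by the pairing $\langle\psi,y\rangle_{E_C}=\langle y,\psi\rangle_{E_*}$. Since a quotient map between quotients of $\ell^1$ by nested subspaces is an isometric isomorphism if and only if the two subspaces coincide, the theorem reduces to the single equivalence ${}^\perp V(E)=W^\perp$ if and only if (AP).

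\noindent For the implication (AP)$\Rightarrow{}^\perp V(E)=W^\perp$ only the inclusion $W^\perp\subseteq{}^\perp V(E)$ needs proof, and this is the short, purely computational half. Given $\xi\in W^\perp$ and $x\in E$, I would invoke (AP) to select $\{y_j\}\subset E_C$ with $y_j\to x$ in $\sigma(E,\cF)$ and $\sup_j\|y_j\|_E<+\infty$. Then $V(y_j)\in W$ forces $\langle V(y_j),\xi\rangle=0$, while $y_j\to x$ in $\sigma(E,\cF)$ means $V(y_j)\to V(x)$ coordinatewise with $\sup_j\|V(y_j)\|_\infty<+\infty$; as $\xi\in\ell^1$, dominated convergence yields $\langle V(x),\xi\rangle=\lim_j\langle V(y_j),\xi\rangle=0$. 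Since $x$ is arbitrary, $\xi\in{}^\perp V(E)$.

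\noindent The substantial direction is ${}^\perp V(E)=W^\perp\Rightarrow$(AP), and the difficulty—the step I expect to be the main obstacle—is to upgrade a weak-star \emph{net} density statement to a \emph{bounded sequential} one. Assuming the equality, the bipolar theorem together with the weak-star closedness of $V(E)$ (proved in Theorem \ref{main}) gives $\overline{W}^{\,\sigma(\ell^\infty,\ell^1)}=V(E)$, i.e. $W$ is weak-star dense in $V(E)$; by itself this produces only unbounded nets. To obtain the uniform bound I would work inside $V(E)=(E_*)^*$ with predual $E_*=\ell^1/{}^\perp V(E)$, where the equality ${}^\perp V(E)=W^\perp$ says exactly that the annihilator of $W$ in $E_*$ is trivial, so the polar of $W$ in $E_*$ is $\{0\}$. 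Computing the polar of $W\cap\mathbb B_{\ell^\infty}$ then gives $(W\cap\mathbb B_{\ell^\infty})^{\circ}=\overline{\mathrm{co}}(\{0\}\cup\mathbb B_{E_*})=\mathbb B_{E_*}$, whence by the bipolar theorem $\overline{W\cap\mathbb B_{\ell^\infty}}^{\,\sigma(\ell^\infty,\ell^1)}=\mathbb B_{V(E)}$. Finally, since $E_*$ is separable the weak-star topology is metrizable on bounded sets and there coincides with coordinatewise convergence (the classes $[\delta^n]$ span a dense subspace of $E_*$); thus every $v=V(x)\in\mathbb B_{V(E)}$ is a coordinatewise limit of a sequence in $W\cap\mathbb B_{\ell^\infty}$. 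Rescaling by $\|x\|_E$ and reading this back through $V$ yields $\{y_j\}\subset E_C$ with $\|y_j\|_E\le\|x\|_E$ and $y_j\to x$ in $\sigma(E,\cF)$, which is (AP). The crux is this isometric bipolar computation: it is precisely the isometric—rather than merely isomorphic—nature of $(E_C)^*\cong E_*$, encoded in the annihilator equality, that forces the approximation to be norm-bounded and closes what would otherwise be a genuine gap between weak-star density and bounded approximation.
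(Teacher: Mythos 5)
Your proposal is correct and, in substance, follows the paper's own path. The forward implication is identical: like the paper, you reduce the isometric identification to the annihilator equality $W^\perp={}^\perp V(E)$ (the paper writes $(V(E_C))^\perp=(V(E))^\perp\cap\ell^1$) and prove the nontrivial inclusion by exactly the same dominated-convergence computation. For the converse, the paper argues abstractly: from $(E_C)^*\cong E_*$ it deduces $(E_C)^{**}\cong E$ with the canonical embedding equal to the inclusion, and then cites Goldstine's theorem to obtain the bounded weak-star approximation. You instead carry out Goldstine's proof in situ: computing the polar of $\mathbb B_W$ and applying the bipolar theorem. Since Goldstine's theorem \emph{is} the bipolar theorem applied to the unit ball, this is the same mechanism, but your rendering has two genuine merits: it isolates exactly where the isometric (rather than merely isomorphic) nature of the identification enters — the polar computation $(\mathbb B_W)^\circ=\mathbb B_{E_*}$ is nothing but the statement that restriction $E_*\to W^*$ is isometric, i.e. $\mathrm{dist}_{\ell^1}(\xi,W^\perp)=\mathrm{dist}_{\ell^1}(\xi,{}^\perp V(E))$ — and it makes explicit the net-to-sequence upgrade via metrizability of bounded sets of $(E_*)^*$ in the weak-star topology (using separability of $E_*$), a step the paper glosses over with ``or equivalently''.

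One caution on your middle step: the equation $(W\cap\mathbb B_{\ell^\infty})^\circ=\overline{\mathrm{co}}(\{0\}\cup\mathbb B_{E_*})$ should not be derived from the intersection-of-polars formula $(A\cap B)^\circ=\overline{\mathrm{co}}(A^\circ\cup B^\circ)$, because that formula requires both sets to be closed in the relevant weak topology, and $W$ is by design weak-star \emph{dense} in $V(E)$, not closed. The correct justification of $(\mathbb B_W)^\circ=\mathbb B_{E_*}$ is the Hahn--Banach/quotient-norm identity just mentioned: for $\psi=[\xi]\in E_*$ one has $\sup_{w\in\mathbb B_W}|\langle w,\psi\rangle|=\mathrm{dist}_{\ell^1}(\xi,W^\perp)$, which equals $\|\psi\|_{E_*}$ precisely when $W^\perp={}^\perp V(E)$. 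Since your final sentence shows you understand that this isometric identity is the crux, this is a flaw of presentation rather than a gap, but as written the polar computation is unjustified and should be replaced by this one-line argument.
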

\begin{proof}
	We start from the fact that $V$ maps $E_C$ isometrically in $C$, so that $(E_C)^*\cong \ell^1/(V(E_C))^\perp$. Using Theorem \ref{predualquotient}, it is sufficient to show that $$(V(E_C))^\perp=(V(E))^\perp\cap\ell^1$$ (note that $(V(E_C))^\perp$ is taken in $\ell^1$, while $(V(E))^\perp$ is taken in $\mathrm{ba}(\N)$). Equivalently, for $\xi\in\ell^1$ we need to have that if $\xi$ annihilates $V(E_C)$ then it also annihilates $V(E)$. To show this we fix $\xi\in (V(E_C))^\perp$ and $x\in E$, and we take a sequence $\{y_j\}_{j\in\N}\subset E_C$ as in condition (AP). Identifying elements of $\ell^1$ as elements of $\mathrm{ba}(\N)$, we can write
	\begin{equation*}
		\langle \xi,V(x)\rangle_{\ell^\infty}=\langle \xi,V(y_j)\rangle_C+\langle \xi,V(x-y_j)\rangle_{\ell^\infty}=\langle \xi,V(x-y_j)\rangle_{\ell^\infty}=\sum_{n\in \mathbb N}\xi_n\langle f_n,x-y_j\rangle_E.
	\end{equation*}
	Fix $\varepsilon>0$. For $k\in\N$ have that \begin{equation*}
	\left|\sum_{n=k}^{\infty}\xi_n\langle f_n,x-y_j\rangle_E\right|\le\sup_{n\in\N}\left|\langle f_n,x-y_j\rangle_E\right|\sum_{n=k}^{\infty}|\xi_n|,
	\end{equation*}
	which goes to $0$ as $k\to\infty$ since
	$$
	\sup_{n\in\N}\left|\langle f_n,x-y_j\rangle_E\right|\le\sup_{n\in\N}\left|\langle f_n,y_j\rangle_E\right|+\sup_{n\in\N}\left|\langle f_n,x\rangle_E\right|=\|y_j\|_E+\|x\|_E<\infty,
	$$
	so we can take $N=N(\varepsilon)\in\N$ such that
	\begin{equation*}
	\left|\sum_{n=N+1}^{\infty}\xi_n\langle f_n,x-y_j\rangle_E\right|\le\frac{\varepsilon}{2}.
	\end{equation*}
	Now, since for all $n\in\N$ we have $\langle f_n,y_j\rangle_E\to\langle f_n,x\rangle_E$, there exists a $\nu\in\N$ such that for all $j>\nu$ we have
	\begin{equation*}
		\left|\sum_{n=1}^{N}\xi_n\langle f_n,x-y_j\rangle_E\right|\le\frac{\varepsilon}{2},
	\end{equation*}
	so combining the previous inequalities we have that $|\langle \xi,V(x)\rangle_{\ell^\infty}|\le\varepsilon.$ To prove our claim it suffices to take $\varepsilon\to 0$.\\
	
	Conversely, since $(E_C)^{**}\cong E$ and the canonical isometry $J:E_C\to E$ coincides with the immersion map, the Goldstine theorem implies that the unit ball of $E_C$ is $\sigma(E,E_*)$ dense in the unit ball of $E$, or equivalently that for every $x\in E$ there exists a sequence $\{y_j\}_{j\in\N}\subset E_C$ such that $y_j\overset{*}{\rightharpoonup}x$ and $\sup_{j\in\N}\|y_j\|_{E_C}\le\|x\|_E$. Since $\cF\subset E_*$, the claim is proven.
\end{proof}
\begin{rmk}\label{inclusion}
	In case (AP) does not hold, we still have that $^\perp V(E)\subset (V(E_C))^\perp$, therefore $(E_C)^*\cong \ell^1/(V(E_C))^\perp\subset \ell^1/^\perp V(E)\cong E_*$.
\end{rmk}


When (AP) holds, $E_C$ has other interesting properties, under additional conditions on $C$.
\begin{prop}\label{Mid}
	Suppose $C$ is an M-embedded predual of $\ell^1$ and that condition (AP) holds. Then $E_C$ is an $M$-embedded Banach space, i.e. we have the decomposition $(E_C)^{***}=E^*=(E_C)^*\oplus_1 (E_C)^\perp$.
\end{prop}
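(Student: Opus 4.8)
The plan is to realize $E_C$ as a closed subspace of $C$ through $V$, invoke the fact that $M$-embeddedness passes to subspaces, and then translate the resulting $\ell^1$-decomposition through the duality of Theorem \ref{duality}. First I would check that $V$ restricts to an isometric isomorphism of $E_C$ onto a \emph{closed} subspace of $C$. Since $\cF$ is norming, $V:E\to\ell^\infty$ is an isometry, and by definition $V$ maps $E_C$ into $C$; as $C$ is closed in $\ell^\infty$, the set $E_C=V^{-1}(C)$ is closed in $E$, hence a Banach space, and its isometric image $V(E_C)=V(E)\cap C$ is complete, therefore a closed subspace of $C$. Thus $E_C\cong V(E_C)$ isometrically, with $V(E_C)$ a closed subspace of the $M$-embedded space $C$.

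The key structural input is that the class of $M$-embedded spaces is hereditary: every closed subspace of an $M$-embedded Banach space is again $M$-embedded (see \cite{harmand2006m}). Applying this to $V(E_C)\subset C$ gives that $V(E_C)$, and hence $E_C$, is $M$-embedded; that is, $(E_C)^{***}=(E_C)^*\oplus_1(E_C)^\perp$ is an $\ell^1$-direct sum, where $(E_C)^\perp$ is the annihilator of the canonical image of $E_C$ in $(E_C)^{***}$.

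It then remains to rewrite this decomposition in the stated form. Since (AP) holds, Theorem \ref{duality} yields the isometric isomorphism $(E_C)^*\cong E_*$; dualizing and using that $E_*$ is an isometric predual of $E$, we obtain $(E_C)^{**}\cong (E_*)^*\cong E$ and hence $(E_C)^{***}\cong E^*$, with the canonical image of $E_C$ in $(E_C)^{**}$ corresponding to $E_C\subset E$. Under these (isometric) identifications, $(E_C)^\perp$ becomes $\{f\in E^*:\langle f,x\rangle_E=0\ \forall x\in E_C\}$ and the $M$-embedded decomposition reads $E^*=(E_C)^*\oplus_1(E_C)^\perp$, which is the assertion.

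I expect the only genuinely delicate point to be the clean invocation of heredity; once $V(E_C)$ is correctly recognized as a closed subspace of $C$, the remainder is bookkeeping. One must verify that every identification in the last step is isometric, so that the $\oplus_1$ structure — and not merely a topological direct sum — is preserved; this is guaranteed because Theorem \ref{duality} produces an isometric isomorphism. It is worth noting that (AP) is not needed for the $M$-embeddedness of $E_C$ itself, which follows from heredity alone, but only to identify $(E_C)^*$ with $E_*$ and $(E_C)^{***}$ with $E^*$, so that the decomposition can be phrased intrinsically on $E^*$.
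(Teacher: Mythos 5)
Your proof is correct, but it takes a genuinely different route from the paper. You reduce everything to the heredity theorem for $M$-embedded spaces (closed subspaces of $M$-embedded spaces are again $M$-embedded, Theorem III.1.6 in \cite{harmand2006m}): after checking that $E_C=V^{-1}(C)$ is closed in $E$ and hence $V(E_C)=V(E)\cap C$ is a closed subspace of $C$, heredity gives the $\ell^1$-decomposition of $(E_C)^{***}$ at once, and (AP) enters only through Theorem \ref{duality} to identify $(E_C)^*\cong E_*$, $(E_C)^{**}\cong E$ (with the canonical embedding of $E_C$ matching the inclusion $E_C\subset E$, as noted in the proof of that theorem) and $(E_C)^{***}\cong E^*$, so that the canonical decomposition can be read on $E^*$. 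The paper instead argues by hand: it starts from the topological decomposition $f=\psi+g$ of $f\in E^*=(E_C)^{***}$, lifts $f$ by Hahn--Banach to a norm-preserving $m\in\mathrm{ba}(\N)$, splits $m=\xi+m_s$ using the $M$-embedding of $C$ in the concrete form $\mathrm{ba}(\N)=\ell^1\oplus_1 C^\perp$, and then uses (AP) to show $\psi$ is the class of $\xi$ and $g$ the class of $m_s$, from which the norm identity $\|f\|_{E^*}=\|\psi\|+\|g\|$ follows. Your approach is shorter and conceptually cleaner modulo the cited heredity theorem, and it isolates nicely what (AP) is actually for: $E_C$ is $M$-embedded regardless, and (AP) is only needed to phrase the decomposition on $E^*$ — an observation the paper does not make explicit. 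The paper's argument buys self-containedness (only Hahn--Banach plus the definition of $M$-embedding for $C$) and an explicit description of the two components of a given $f\in E^*$, which is what gets reused in the subsequent distance formula.
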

\begin{proof}
	Let $\phi\in E^*=(E_C)^{***}$. From the canonical decomposition $(E_C)^{***}=(E_C)^*\oplus (E_C)^\perp$, we can write $f=\psi+g$, with $\psi\in (E_C)^*$ and $g\in (E_C)^\perp$. Viewing $f$ as a continuous linear operator on $V(E)\subset\ell^\infty$, we can use the Hahn-Banach theorem to extend it to an element $m\in\mathrm{ba}(\N)$ such that $\|m\|_{\mathrm{ba}(\N)}=\|f\|_{E^*}$. 
	We now use the known fact that $C$ is an $M$-embedded space, i.e. we have a decomposition $(\ell^1)^{**}\cong\mathrm{ba}(\N)=\ell^1\oplus_1 C^\perp$. Using this decomposition we write $m=\xi+m_s$, with $\xi\in\ell^1$, $m_s\in C^\perp$ and $\|m\|_{\mathrm{ba}(\N)}=\|\xi\|_{\ell^1}+\|m_s\|_{\mathrm{ba}(\N)}$.\\
	We notice that $\psi=0$ iff $\xi\in (V(E))^\perp$: if $x\in E$ we consider a sequence $y_j$ as in (AP) and we obtain, reasoning as in the proof of theorem \ref{duality}, that \begin{equation*}
	\begin{split}
		\langle \xi, V(x)\rangle_C&=\lim\limits_{j\to\infty}\langle\xi,V(y_j)\rangle_C=\lim\limits_{j\to\infty}\langle m,V(y_j)\rangle_{\ell^\infty}\\
		&=\lim\limits_{j\to\infty}\langle f,y_j\rangle_E=\lim\limits_{j\to\infty}\langle \psi,y_j\rangle_E=\langle \psi,x\rangle_E.
		\end{split}
	\end{equation*}
	A consequence of this is that $\psi$ can be identified with the equivalence class of $\xi$, and $g$ with the equivalence class of $m_s$.\\
	We have now
	\begin{equation*}
		\|f\|_{E^*}=\|m\|_{\mathrm{ba}(\N)}=\|\xi\|_{\ell^1}+\|m_s\|_{\mathrm{ba}(\N)}\ge\|\psi\|_{(E_C)^*}+\|g\|_{(E_C)^\perp},
	\end{equation*}
	which combined with the triangular inequality $\|f\|_{E^*}\le\|\psi\|_{E^*}+\|g\|_{E^*}=\|\psi\|_{E_C^*}+\|g\|_{(E_C)^\perp}$ concludes the proof.
\end{proof}
Recall that a Banach space $Y$ is a unique (isometric) predual if for all Banach spaces $Z$, $Y^*\cong Z^*$ implies $Y\cong Z$, while it is a strongly unique (isometric) predual if the only norm one projection from $Y^{***}$ to $Y^*$ with weakly closed kernel is the projection induced by the canonical decomposition $Y^{***}=Y^*\oplus Y^\perp$; equivalently, if $P\subset Y^{**}$ is such that $P^*\cong Y^*$ via the duality induced by $Y^*$, then $P$ must coincide with $Y$, since for a closed subspace $C$ of a Banach space $X$ we have $^\perp(C^\perp)=C$.\\
It is known \cite{harmand2006m} that if $Z$ is an $M$-embedded Banach space then $Z^*$ is a strongly unique predual, while $Z$ is not, unless it is reflexive. We remark that it was also shown \cite{pfitzner2007separable} that a separable $L$-embedded Banach space is a unique predual, which partially generalizes the previously mentioned result since the dual of an $M$-embedded space is $L$-embedded.
\begin{cor}
	Suppose $C$ is an M-embedded predual of $\ell^1$ and that condition (AP) holds. Then $E_*$ is a strongly unique predual, while $E_C$ is not, unless it is reflexive.
\end{cor}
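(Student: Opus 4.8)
The plan is to obtain this as a direct consequence of the two ingredients already in place: Proposition \ref{Mid}, which manufactures the $M$-embeddedness of $E_C$ out of the hypotheses, and the structural result recalled from \cite{harmand2006m} concerning preduals of $M$-embedded spaces. No genuinely new argument should be required beyond assembling these correctly.

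First I would apply Proposition \ref{Mid}. Since by assumption $C$ is an $M$-embedded predual of $\ell^1$ and condition (AP) holds, that proposition gives immediately that $E_C$ is an $M$-embedded Banach space, with the $\ell^1$-decomposition $(E_C)^{***}=(E_C)^*\oplus_1 (E_C)^\perp$. I would then feed this into the recalled fact that for any $M$-embedded Banach space $Z$ the dual $Z^*$ is a strongly unique predual, while $Z$ itself is not, unless it is reflexive. Taking $Z=E_C$ yields at once both halves in their $E_C$-formulation: $(E_C)^*$ is a strongly unique predual, and $E_C$ is not, unless $E_C$ is reflexive. The second assertion of the corollary is precisely this last statement, so that half is finished.

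For the first assertion I would transport the conclusion along the isometric identification $(E_C)^*\cong E_*$ provided by Theorem \ref{duality}, which is available exactly because (AP) holds. Being a strongly unique predual is, by the very definition recalled before the corollary, a property of the triple $(Y,Y^*,Y^{***})$ together with the canonical norm-one projection onto $Y^*$; it is therefore invariant under isometric isomorphism. Hence $(E_C)^*$ being a strongly unique predual (of $(E_C)^{**}\cong E$) forces its isometric copy $E_*$ to be one as well, consistently with $(E_*)^*=E$. The only point deserving a word of care, and the closest thing to an obstacle here, is this invariance: one must check that the isometry of Theorem \ref{duality} is compatible with the duality pairings, so that the canonical projection $(E_C)^{***}\to (E_C)^*$ is carried to the canonical projection $E^{**}\to E_*$. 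This is built into Theorem \ref{duality}, whose isometry is realized through the pairing $\langle\psi,y\rangle_{E_C}=\langle y,\psi\rangle_{E_*}$ inherited from the pairing of $E$ with $E_*$, so the verification reduces to routine bookkeeping rather than any substantive difficulty.
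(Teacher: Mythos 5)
Your proof is correct and is exactly the argument the paper intends: the corollary is stated without a separate proof precisely because it follows by combining Proposition \ref{Mid}, the recalled result of \cite{harmand2006m} applied to $Z=E_C$, and the isometric identification $(E_C)^*\cong E_*$ from Theorem \ref{duality}. Your final cautionary point about pairing compatibility is harmless but not actually needed, since being a strongly unique predual is an intrinsic property of the space (formulated entirely via its own successive duals and canonical maps) and is therefore preserved by any isometric isomorphism.
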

\begin{prop} 
Let $\cF= \left\{f_j\right\}_{j\in \mathbb N}\subset E^*$. Suppose $C$ is an M-embedded predual of $\ell^1$ and that condition (AP) holds. Then for $x\in E$ the following distance formula holds:
	\begin{equation}
	\mathrm{dist}_E(x,E_C)=\limsup_{j\to\infty}|\langle f_j,x\rangle_E|.
	\end{equation}
\end{prop}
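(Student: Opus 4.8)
The plan is to compute the distance by dualizing and then to recognize the resulting quantity as a distance in $\ell^\infty$, which is governed by a classical formula. Since $E_C$ is $M$-embedded by Proposition \ref{Mid}, we have $E=(E_C)^{**}$ with $E_C$ sitting inside $E$ as its canonical image, and the quotient duality $(E/E_C)^*\cong(E_C)^\perp$ (the annihilator of $E_C$ in $(E_C)^{***}=E^*$) yields
$$
\mathrm{dist}_E(x,E_C)=\sup\left\{|\langle g,x\rangle_E|:g\in (E_C)^\perp,\ \|g\|_{E^*}\le 1\right\}.
$$
So the first task is to describe $(E_C)^\perp$ explicitly and to evaluate the pairing $\langle g,x\rangle_E$ on it.

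Here I would reuse the decomposition machinery from the proof of Proposition \ref{Mid}. The adjoint $V^*:\mathrm{ba}(\N)=(\ell^\infty)^*\to E^*$ is a metric surjection which carries $\ell^1$ into $E_*=(E_C)^*$ (since it sends the $n$-th coordinate vector of $\ell^1$ to $f_n$) and $C^\perp$ into $(E_C)^\perp$. Given $g\in (E_C)^\perp$, I would take a norm-preserving Hahn--Banach extension $m\in\mathrm{ba}(\N)$ of $g$ viewed on $V(E)$, and split it as $m=\xi+m_s$ along the $M$-embedded decomposition $\mathrm{ba}(\N)=\ell^1\oplus_1 C^\perp$. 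Since $V^*\xi\in E_*$ and $V^*m_s\in(E_C)^\perp$, uniqueness in the direct sum $E^*=E_*\oplus_1(E_C)^\perp$ forces the $E_*$-component $V^*\xi$ of $g$ to vanish; the $\oplus_1$ norm additivity then gives $\xi=0$ and $\|m_s\|_{\mathrm{ba}(\N)}=\|g\|_{E^*}$, with $\langle g,x\rangle_E=\langle m_s,Vx\rangle_{\ell^\infty}$. Running the correspondence backwards, every $m_s\in C^\perp$ produces some $g=V^*m_s\in(E_C)^\perp$ with $\|g\|\le\|m_s\|$ and the same pairing. Taking suprema, the constraints $\|g\|_{E^*}\le1$ and $\|m_s\|_{\mathrm{ba}(\N)}\le1$ describe the same family of pairings, so
$$
\mathrm{dist}_E(x,E_C)=\sup\left\{|\langle m_s,Vx\rangle_{\ell^\infty}|:m_s\in C^\perp,\ \|m_s\|_{\mathrm{ba}(\N)}\le 1\right\}=\mathrm{dist}_{\ell^\infty}(Vx,C).
$$

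It then remains to identify $\mathrm{dist}_{\ell^\infty}(Vx,C)$ with $\limsup_{j\to\infty}|Vx(j)|=\limsup_{j\to\infty}|\langle f_j,x\rangle_E|$. Being $M$-embedded, $C$ is an $M$-ideal in its bidual $\ell^\infty\cong C(\beta\N)$; since the $M$-ideals of a commutative $C^*$-algebra are exactly its closed ideals \cite{harmand2006m}, we have $C=\{f:f|_D=0\}$ for a closed $D\subseteq\beta\N$, and the predual requirement $C^*\cong\ell^1$ pins the vanishing set down to $D=\beta\N\setminus\N$, i.e. $C=c_0$. I would then invoke the classical formula $\mathrm{dist}_{\ell^\infty}(\xi,c_0)=\limsup_{j}|\xi_j|$ (truncation for the upper bound, testing against finitely additive means concentrated at infinity for the lower bound), which closes the argument.

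The main obstacle is the reverse inequality $\mathrm{dist}_E(x,E_C)\le\mathrm{dist}_{\ell^\infty}(Vx,C)$ hidden in the second paragraph: the bound $\ge$ is immediate from $V(E_C)\subseteq C$ together with the isometry of $V$, but $\le$ asserts that an approximation of $Vx$ by arbitrary elements of $C$ can be realized by elements lying in the range $V(E)$. This is precisely what the two $L$-decompositions (the $M$-embeddedness of both $C$ and $E_C$) deliver, through the isometric identification $(E_C)^\perp\cong C^\perp$; and it is here that condition (AP) enters, not directly but via Proposition \ref{Mid}, which is what guarantees $(E_C)^*\cong E_*$ and hence the decomposition $E^*=E_*\oplus_1(E_C)^\perp$ on which the whole identification rests.
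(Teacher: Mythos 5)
Your proof is correct, and its first half is essentially the paper's own argument written out with the justifications the paper omits: the paper likewise dualizes, writing $\mathrm{dist}_E(x,E_C)=\sup\{|\langle f,x\rangle_E|:f\in (E_C)^\perp,\ \|f\|_{E^*}=1\}$, lifts such an $f$ by Hahn--Banach to $m\in\mathrm{ba}(\N)$, and then asserts (this is exactly where M-embeddedness of $C$ and (AP) enter, through the mechanism of Proposition \ref{Mid}) that the lift may be taken in the singular part of $\mathrm{ba}(\N)=\ell^1\oplus_1 C^\perp$. Where you genuinely diverge is the endgame. The paper finishes with a bare-hands tail estimate: a singular lift does not see finitely supported sequences, so $\langle m,V(x)\rangle_{\ell^\infty}=\langle m,(V(x))^{(j)}\rangle_{\ell^\infty}$, whose modulus is at most $\|m\|\sup_{k>j}|\langle f_k,x\rangle_E|$, and letting $j\to\infty$ gives the limsup. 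You instead recognize the dualized quantity as $\mathrm{dist}_{\ell^\infty}(V(x),C)$ and prove the structural fact that an M-embedded predual of $\ell^1$ sitting in $\ell^\infty$ under the natural pairing must be $c_0$ itself, then invoke the classical formula $\mathrm{dist}_{\ell^\infty}(\xi,c_0)=\limsup_j|\xi_j|$. Your route uses heavier machinery (the classification of M-ideals in commutative C*-algebras \cite{harmand2006m}), but it buys two real things. First, rigor: the paper's tail estimate silently requires that elements of $C^\perp$ annihilate finitely supported sequences, i.e.\ that $c_0\subset C$ --- this is what its ambiguous notation ``$m\in(\ell^1)^\perp$'' hides --- and your classification is precisely what legitimizes that step. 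Second, insight: it makes explicit that the hypothesis ``$C$ is an M-embedded predual of $\ell^1$'' forces $C=c_0$, so the proposition is only ever about $E_{c_0}=\{x\in E:\langle f_n,x\rangle_E\to 0\}$; the paper never states this, and it also shows that the spaces $c_0(\tau)$ of Section \ref{preduals} can be M-embedded only when they coincide with $c_0$.

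Two points to tighten. The claim that the predual requirement pins $D$ down to $\beta\N\setminus\N$ deserves a line of proof: injectivity of $\ell^1\to C^*$ forces $D\cap\N=\emptyset$, and if some $p\in\beta\N\setminus(\N\cup D)$ existed, Urysohn on $\beta\N$ shows the evaluation $\delta_p$ is a functional on $C=J_D$ not representable by any $\xi\in\ell^1$, contradicting surjectivity. Also, your assertion ``$\xi=0$'' is true but not proved as stated; it needs the chain $\|\xi\|_{\ell^1}+\|m_s\|_{\mathrm{ba}(\N)}=\|m\|_{\mathrm{ba}(\N)}=\|g\|_{E^*}=\|V^*m_s\|_{E^*}\le\|m_s\|_{\mathrm{ba}(\N)}$; in fact you never need $\xi=0$ at all, since only $g=V^*m_s$ together with $\|m_s\|_{\mathrm{ba}(\N)}\le\|g\|_{E^*}$ is used in taking the suprema.
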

\begin{proof}
	The inequality $\mathrm{dist}_E(x,E_C)\ge\limsup_{j\to\infty}|\langle f_j,x\rangle_E|$ is trivial, so let us now prove the other inequality. To do this, we can write $\mathrm{dist}_E(x,E_C)=\sup\limits_{\substack{f\in E_C^\perp\\\|f\|=1}}\left|\langle f,x\rangle_E\right|$. Lifting $f$ to an element $m$ of $\mathrm{ba}(\N)$ we can then write
	\begin{equation*}
	\begin{split}
	\mathrm{dist}_E(x,E_C)&\le\sup\limits_{\substack{m\in \left(\ell^1\right)^\perp\\\|m\|=1}}\left|\langle m,V(x)\rangle_{\ell^\infty}\right|\\
	&=\sup\limits_{\substack{m\in \left(\ell^1\right)^\perp\\\|m\|=1}}\left|\langle m,(V(x))^{(j)}\rangle_{\ell^\infty}\right|=\limsup_{j\to\infty}\left|\langle f_j,x\rangle_E\right|,
	\end{split}
	\end{equation*}
	where $(V(x))^{(j)}$ is the sequence that is equal to $0$ for all indices less or equal than $j$ and to $(V(x))_k=\langle f_k,x\rangle_E$ when the index $k$ is greater than $n$. This concludes the proof.
\end{proof}

\section{Atomic decompositions}\label{atomic}

The space $BMO(\mathbb R^d)$ of functions of bounded mean oscillation of John-- Nirenberg \cite{john1961functions} is a \emph{dual space}. A simple proof can be given for $d=1$ if we equipe $E=BMO(\mathbb R)$ with the norm 
\begin{equation}\label{4.1}
\| u\| = \sup_{I \subset \mathbb R} \left( \fint_I |u- u_I|^2\,  dx  \right)^{\frac{1}{2}}
\end{equation}
where $I$ is a bounded interval, $u_I=\fint_I u$ and $u\in L^2_{\text{loc}}(\mathbb R)$.

The famous Theorem of C. Fefferman \cite{fefferman} more precisely and deeply says that $BMO(\mathbb R^d)$ is the \emph{dual} of the Hardy space $\mathcal H^1(\mathbb R^d)$ of $L^1$ functions whose vector valued Riesz transform $Rf=(R_1f, \cdots, R_d f):= D I_1 f$ belongs to $L^1$, where $I_1f$ is the Riesz potential
\begin{equation}\label{4.2}
I_1f(x)= \frac{1}{\gamma}\int_{\mathbb R^d} \frac{f(y)}{|x-y|^{d-1}}\, dy
\end{equation}
This is the original definition of real Hardy spaces $\mathcal H^1(\mathbb R^d)$ due to Stein-- Weiss \cite{SW}.

The norm in $\mathcal H^1$ is given by
\begin{equation}\label{4.3}
\| f\|_{\mathcal H^1}= \| f\|_{L^1} + \sum_{j=1}^d \| R_j f\|_{L^1}
\end{equation}
and the pairing between BMO and $\mathcal H^1$ is represented by
\begin{equation}
f\in \mathcal H^1 \to \int u f\, dx \qquad u\in BMO.
\end{equation}
Actually this integral converges absolutely if $f$ belongs to a certain \emph{dense} linear subspace $\mathcal H^1_a$ of $\mathcal H^1$ consisting of finite linear combinations of atoms (see \cite{S}, Ch.3, Section 2.4).

By  \emph{atom} in $\mathcal H^1(\mathbb R)$ we mean here a function $a: \mathbb R\to \mathbb R$ such that there exists an interval $I$ such that
\begin{equation}\label{4.5}
\begin{cases} 
a\in L^2_{\text{loc}}(\mathbb R), \text{supp } a \subset I, \\
\\
\fint_I a=0,\;\, \| a\|_{L^2(I)}\leq \dfrac{1}{|I|^{1/2}}.
\end{cases}
\end{equation}
Let us now assume that $BMO(\mathbb R)$ is dual of the Banach space $E_*$ and let us show that an element $f$ of $E_*$ has the form
\begin{equation}\label{4.6}
f= \sum_{j=1}^{\infty}\lambda_j a_j, \sum |\lambda_j|<\infty, \, a_j\in \mathcal A
\end{equation}
where $\mathcal A$ is the set of atoms \eqref{4.5}.

Observe first of all that for all intervals $I$
$$
\left( \fint_I |u- u_I|^2\,  dx  \right)^{\frac{1}{2}}= \sup_{a\in \mathcal A_I} \big| \fint_I u a \big|
$$
where $a\in \mathcal A_I$ iff $\fint_I a=0$, $\fint_I a^2\leq 1$.

Hence
$$
\| u\| =\sup \big| \int_I u a \big|
$$
where the supremum is taken over all atoms verifying \eqref{4.5}. It follows that elements $f$ of the predual $E_*$ of $E$ have representation \eqref{4.6}.
Moreover, if $(a_j)$ is a sequence of $\mathcal H^1_a$ and $(\lambda_j)$ of real numbers satisfying $\sum |\lambda_j|<\infty$, then the series 
$$
f= \sum \lambda_j a_j
$$
converges in the sense of distributions and its sum belongs to $\mathcal H^1$ with $\|f\|_{\mathcal H^1} \leq c \sum |\lambda_j|$ (see \cite{S}, Ch. 3, Section 2.2.).

Also the space $BV$ of functions of bounded variation is a dual of a separable Banach space with atomic decomposition. See \cite{d2020atomic} for a general result on preduals of Banach spaces defined and normed by the fact that $x\in E$ if and only if
$$
\|x\|_E= \sup_{L\in \mathcal L} \| Lx\|_Y<\infty
$$
where $\mathcal L$ is a collection of operators $L: X\to Y$, $X$ and $Y$ Banach spaces.

More precisely, if $X$ is a reflexive Banach space and $\mathcal L=(L_j)_{j\in \mathbb N}$
is a sequence of linear forms $L_j\subset X^*$ and 
$$
E=\left\{ x\in X: \sup_j |L_j x|<\infty \right\}
$$
is a Banach space under the norm
$$
\|x\|_E= \sup_{j} | L_j x|
$$
and $E\hookrightarrow X$ then $E$ has an isometric predual with atomic decomposition.
The cases $E=BMO$, $E= BV$ or $E=B$ the space recently introduced by Bourgain-Brezis-Mironescu are included (see also \cite{angrisaniascionemanzo}) .\\
In \cite{d2020DSS} the same atomic decomposition result is proved weakening the reflexivity assumption on $X$.

This result on atomic decomposition can be generalized in a very simple way using Theorem \ref{main}.
	Let $E_*$ be a separable Banach space $\cF=\{f_j\}_{j\in\N}$, be as in the first part of the proof of Theorem \ref{main}, and apply the theorem to $E:=(E_*)^*$ and $\cF$. This allows us to identify $E_*$ with $\ell^1/^\perp V(E)$, so to every $\psi\in E_*$ we can associate $\xi=\{\xi_j\}_{j\in\N}\in\ell^1$, defined up to elements of $^\perp V(E)$, such that
	\begin{equation}
		\psi\simeq [\xi]=\left[\sum_{j\in\N}\xi_j\delta_j\right]=\sum_{j\in\N}\xi_j[\delta_j]=\sum_{j\in N}\xi_j f_j.
	\end{equation}
	This representation of the elements of $E_*$ has the nice property that, by definition of quotient,
	\begin{equation*}
		\|\psi\|_{E_*}=\inf\|\xi\|_{\ell^1},
	\end{equation*}
where the infimum is taken between all such representation. This is actually better than the usual results on atomic decompositions, where there is only an equivalence of norms. 

\section{Properties of some preduals of $\ell^1$}\label{preduals}
We know that $c_0$, the space of all sequences of real numbers converging to zero, equipped with the supremum norm, is an isometric predual of $\ell^1$ and is M-embedded, so one could hope that it could be used for various examples. However, it is far too limiting to only consider this space, and to show some interesting examples of spaces satisfying this structure we need to introduce and study particular preduals of $\ell^1$.
\begin{defn}
	Let $X$ be a locally compact Hausdorff space. We define by $C_0(X)$ the set of continuous real valued functions $f:X\to\R$ on $X$ that vanish at infinity, i.e. for every $\varepsilon>0$ there exists a compact $K_\varepsilon$ such that $|f(x)|<\varepsilon$ for every $x\notin K_\varepsilon$.
\end{defn}
Let $\tau$ be a locally compact Hausdorff topology on $\N$. We denote the space $C_0(\N,\tau)$ by $c_0(\tau)$. The following theorem holds
\begin{thm}
	Let $\tau$ be a locally compact Hausdorff topology on $\N$. Then $(c_0(\tau))^*\cong\ell^1$.
\end{thm}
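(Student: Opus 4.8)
The plan is to combine the Riesz--Markov--Kakutani representation theorem with the fact that a countable Hausdorff space leaves no room for measures beyond $\ell^1$. Since $(\N,\tau)$ is locally compact Hausdorff by hypothesis, the Riesz representation theorem identifies $(c_0(\tau))^*=C_0(\N,\tau)^*$ isometrically with the space $M(\N,\tau)$ of finite regular signed Borel measures, equipped with the total variation norm, the pairing being $\langle \mu,f\rangle=\int_{\N}f\,d\mu$. The whole argument then reduces to showing that $M(\N,\tau)\cong\ell^1$ isometrically.

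First I would pin down the Borel structure. Because $\tau$ is Hausdorff, every singleton $\{n\}$ is closed, so every subset of $\N$, being a countable union of singletons, is Borel; hence the Borel $\sigma$-algebra is all of $\mathcal P(\N)$. Consequently any finite signed Borel measure $\mu$ is countably additive on $\mathcal P(\N)$ and is therefore completely determined by its values on singletons through $\mu(A)=\sum_{n\in A}\mu(\{n\})$, with total variation $\|\mu\|=\sum_{n\in\N}|\mu(\{n\})|$. This produces a linear bijection $\mu\mapsto(\mu(\{n\}))_{n\in\N}$ between the countably additive finite signed measures on $\mathcal P(\N)$ and $\ell^1$, which is an isometry for the total variation and $\ell^1$ norms. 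Transporting the pairing, the functional attached to $\mu$ acts on $f\in c_0(\tau)$ by $f\mapsto\sum_{n\in\N}\mu(\{n\})f(n)$, i.e.\ exactly the canonical $\ell^1$--$\ell^\infty$ duality restricted to $c_0(\tau)\subset\ell^\infty$; note that this sum runs over \emph{all} $n$, including non-isolated points, so no information is attached only to the isolated part of $\N$.

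The one point that requires care, and that I expect to be the only real obstacle, is reconciling the \emph{regular} measures appearing in the Riesz theorem with \emph{all} the finite signed measures identified above: a priori, regularity could shrink $M(\N,\tau)$ below $\ell^1$. I would show that on a countable Hausdorff space regularity is automatic, so that nothing is lost. Reducing to a finite positive measure via the Hahn--Jordan decomposition, inner regularity is immediate because, $\mu$ being finite, for any $A$ and any $\varepsilon>0$ there is a finite $F\subset A$ with $\mu(A\setminus F)<\varepsilon$, and a finite set is compact in a Hausdorff space. For outer regularity, given $A$ choose a finite $G\subset\N\setminus A$ with $\mu((\N\setminus A)\setminus G)<\varepsilon$; since $G$ is finite it is closed, so $U:=\N\setminus G$ is open, contains $A$, and satisfies $\mu(U\setminus A)\le\mu((\N\setminus A)\setminus G)<\varepsilon$. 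Thus every finite signed Borel measure is regular, $M(\N,\tau)$ is exactly the set of countably additive finite signed measures on $\mathcal P(\N)$, and the displayed isometry gives $M(\N,\tau)\cong\ell^1$.

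Putting the two identifications together yields $(c_0(\tau))^*\cong M(\N,\tau)\cong\ell^1$ isometrically, with the duality realized by $\langle\xi,f\rangle=\sum_n\xi_n f(n)$. I would emphasize that local compactness is used only to invoke the Riesz representation theorem; the reduction of the measure space to $\ell^1$, including the automatic regularity, uses solely countability and the Hausdorff axiom.
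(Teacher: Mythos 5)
Your proof is correct, and although it opens exactly as the paper's does --- both invoke the Riesz--Markov--Kakutani theorem to identify $(c_0(\tau))^*$ isometrically with the regular finite signed Borel measures on $(\N,\tau)$ --- the second half follows a genuinely different, and in fact more robust, route. The paper finishes functional-analytically: it sets $a_j=\mu(\{j\})=L_\mu(\delta_j)$ and concludes by asserting that the span of the indicator functions $\delta_j$ is dense in $c_0(\tau)$. That assertion is delicate: $\delta_j$ is continuous only when $j$ is $\tau$-isolated, so for a non-discrete $\tau$ (say $(\N,\tau)$ homeomorphic to $\{0\}\cup\{1/n:n\ge 1\}$) some $\delta_j$ are not even elements of $c_0(\tau)$, and the closed span of the remaining ones lies inside the proper subspace of functions vanishing at the non-isolated point. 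You sidestep this entirely by working on the measure side: Hausdorffness makes the Borel $\sigma$-algebra all of $\mathcal{P}(\N)$, so countable additivity alone forces $\mu(A)=\sum_{n\in A}\mu(\{n\})$ and $|\mu|(\N)=\sum_{n}|\mu(\{n\})|$, giving the isometry with $\ell^1$ with no density claim needed; and your automatic-regularity lemma (inner regularity from finiteness of the measure plus compactness of finite sets, outer regularity by passing to the complement) is exactly what guarantees that the Riesz correspondence is onto all of $\ell^1$, i.e., that restricting to regular measures does not cut the dual down below $\ell^1$. As for what each approach buys: the paper's argument is shorter and perfectly adequate when every point of $(\N,\tau)$ is isolated, while yours is complete for an arbitrary locally compact Hausdorff $\tau$; your explicit remark that the sum $\sum_{n}\mu(\{n\})f(n)$ runs over non-isolated points as well is precisely the point that the paper's density claim glosses over.
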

\begin{proof}
	 The Riesz-Markov-Kakutani representation theorem for $C_0$ \cite{rudin2006real} implies that every element $L$ of the dual of $c_0(\tau)$ is uniquely represented in the form $L_\mu:x=\{x_j\}_{j\in\N}\in c_0(\tau)\mapsto\displaystyle\int_{\N} x_j\,d\mu(j)$ for some regular signed Borel measure $\mu$, and the norm of $\mu$ in $(c_0(\tau))^*$ is equal to its total variation $|\mu|(\N)$.\\
	 For $j\in\N$, we define $a_j=\mu(\{j\})=L_\mu(\delta_j)$, where $\delta_j$ is the element of $c_0(\tau)$ that is equal to $1$ for the index $j$ and $0$ for every other index. It is easy to see that $\sum_{j\in\N}a_j\le|\mu|(\N)<\infty$, so that $a=\{a_j\}_{j\in\N}\in\ell^1$, and since the span of the $\delta_j$ is dense in $c_0(\tau)$ we can actually identify $\mu$ with $a\in\ell^1$, and $|\mu|(\N)$ actually coincides with $\|a\|_{\ell^1}$, concluding the proof.
\end{proof}
We now prove the following.
\begin{prop}
	Let $\tau$ be a locally compact Hausdorff topology on $\N$. Then $c_0(\tau)$ is M-embedded.
\end{prop}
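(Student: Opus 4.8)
The plan is to show that $c_0(\tau)$ is an M-ideal in its bidual by verifying the restricted $3$-ball property, which by the general theory of M-ideals (see \cite{harmand2006m}) is equivalent to the $\ell^1$-decomposition $(c_0(\tau))^{***}=(c_0(\tau))^*\oplus_1(c_0(\tau))^\perp$ that defines M-embeddedness. First I would fix the relevant identifications. By the previous theorem $(c_0(\tau))^*\cong\ell^1$ through the pairing $\langle f,\mu\rangle=\sum_{j\in\N}f(j)\,\mu(\{j\})$; dualizing once more gives $(c_0(\tau))^{**}\cong(\ell^1)^*=\ell^\infty$, and the canonical embedding $c_0(\tau)\hookrightarrow(c_0(\tau))^{**}$ is simply the inclusion $f\mapsto(f(j))_{j\in\N}$ of $c_0(\tau)$ into $\ell^\infty$. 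Hence $(c_0(\tau))^{***}\cong(\ell^\infty)^*=\mathrm{ba}(\N)$, and the summand $(c_0(\tau))^*=\ell^1$ sits inside $\mathrm{ba}(\N)$ as the countably additive measures. Since the Yosida--Hewitt decomposition $\mathrm{ba}(\N)=\ell^1\oplus_1\mathrm{ba}_{pfa}(\N)$ into countably additive and purely finitely additive parts is already an isometric $\ell^1$-sum, the whole statement reduces to showing that the annihilator $(c_0(\tau))^\perp$ coincides with the purely finitely additive part $\mathrm{ba}_{pfa}(\N)$.

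Second, I would establish this annihilator identity by working directly with the $3$-ball property at the level of the predual, where the geometry is transparent. Concretely: given $f_1,f_2,f_3\in\mathbb B_{c_0(\tau)}$, an arbitrary $z\in\mathbb B_{\ell^\infty}$, and $\varepsilon>0$, I want to produce $g\in c_0(\tau)$ with $\|z+f_i-g\|_{\ell^\infty}\le 1+\varepsilon$ for $i=1,2,3$. The construction mimics the classical argument for $c_0$: using that each $f_i$ vanishes at infinity, choose a compact $K_\varepsilon\subset(\N,\tau)$ outside of which $|f_i|<\varepsilon$ simultaneously for $i=1,2,3$; then choose $g\in c_0(\tau)$ that agrees with $z$ on $K_\varepsilon$ and is negligible off $K_\varepsilon$. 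On $K_\varepsilon$ one then has $z+f_i-g=f_i$, of modulus $\le 1$; off $K_\varepsilon$ one has $|z+f_i-g|\le 1+\varepsilon+\|g\,\mathbf 1_{\N\setminus K_\varepsilon}\|_{\ell^\infty}\le 1+2\varepsilon$, so that the $3$-ball estimate closes. Feeding this back through the standard equivalence yields the desired L-decomposition of $\mathrm{ba}(\N)$.

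The main obstacle is precisely the construction of $g$: I need a genuinely $\tau$-continuous, vanishing-at-infinity function that reproduces the arbitrary bounded sequence $z$ on the compact set $K_\varepsilon$. When the relevant points carry enough isolated structure (for instance when $K_\varepsilon$ is a clopen set of isolated points, as happens for the discrete topology, where $c_0(\tau)=c_0$ and one may simply take $g=z\,\mathbf 1_{K_\varepsilon}$), this is immediate. In the general locally compact Hausdorff case the delicate point is that $z|_{K_\varepsilon}$ need not be continuous, so $g$ cannot literally equal $z$ there; the heart of the proof is therefore to arrange that $K_\varepsilon$ be chosen so that the obstruction to continuity is confined to the region where the $f_i$ are already small, letting the $3$-ball estimate close with a controlled error. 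I expect this localization step --- reconciling the continuity demanded of $g$ with the arbitrariness of $z$ --- to be where essentially all the work lies, and to be the step that must be handled with the greatest care for the conclusion $(c_0(\tau))^\perp=\mathrm{ba}_{pfa}(\N)$ to go through.
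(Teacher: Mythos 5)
You have correctly isolated the crux in your final paragraph, but the obstacle you flagged is not a technical difficulty to be handled with care: it cannot be overcome, because the proposition is false for general locally compact Hausdorff $\tau$. Let $\tau$ be the topology on $\N$ in which every $n\ge 2$ is isolated and the neighborhoods of $1$ are the sets containing $1$ and all but finitely many integers; then $(\N,\tau)$ is homeomorphic to a convergent sequence, hence compact (so certainly locally compact) and Hausdorff, and $c_0(\tau)=C(\N,\tau)=\{f:\lim_{n\to\infty}f(n)=f(1)\}$ is an isometric copy of the space $c$ of convergent sequences. Fix a free ultrafilter $\mathcal{U}$ on $\N$ and let $\mu_{\mathcal{U}}\in\mathrm{ba}(\N)$ be the associated $0$--$1$ valued measure, $\mu_{\mathcal{U}}(A)=1$ iff $A\in\mathcal{U}$. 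For every $f\in c_0(\tau)$ one has $\langle \mu_{\mathcal{U}},f\rangle_{\ell^\infty}=\lim_{\mathcal{U}}f(n)=\lim_{n\to\infty}f(n)=f(1)$, so the canonical decomposition of $\mu_{\mathcal{U}}$ in $(c_0(\tau))^{***}=\mathrm{ba}(\N)$ is $\mu_{\mathcal{U}}=\delta_1+m_s$, where $\delta_1=(1,0,0,\dots)\in\ell^1=(c_0(\tau))^*$ and $m_s:=\mu_{\mathcal{U}}-\delta_1\in(c_0(\tau))^\perp$. Since $\mu_{\mathcal{U}}$ vanishes on finite sets, $\|m_s\|_{\mathrm{ba}(\N)}=2$, while $\|\mu_{\mathcal{U}}\|_{\mathrm{ba}(\N)}=1\ne\|\delta_1\|_{\ell^1}+\|m_s\|_{\mathrm{ba}(\N)}=3$: the decomposition is not an $\ell^1$ sum, so this $c_0(\tau)$ is not M-embedded. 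The same example refutes your key reduction: $m_s$ annihilates $c_0(\tau)$ but its countably additive part is $-\delta_1\ne 0$, so $(c_0(\tau))^\perp\ne\mathrm{ba}_{pfa}(\N)$ (conversely, $\mu_{\mathcal{U}}$ is purely finitely additive but does not annihilate the constant function $\mathbf{1}\in c_0(\tau)$, so neither inclusion holds). Your $3$-ball construction dies at the same spot: with $f_1=\mathbf{1}$, $f_2=f_3=-\mathbf{1}$ and $z(n)=(-1)^n$, any $g$ with $\|z+f_i-g\|_{\ell^\infty}\le 1+\varepsilon$ would have to satisfy $g(n)\ge 1-\varepsilon$ for $n$ even and $g(n)\le -1+\varepsilon$ for $n$ odd, and for $\varepsilon<1$ no such $g$ can lie in $c_0(\tau)$.

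For comparison, the paper's own proof stumbles at exactly the point you identified: it asserts that $(c_0(\tau))^\perp$ is the set of $m\in\mathrm{ba}(\N)$ vanishing on all $\tau$-compact sets, and this is false outside the discrete case. In the example above every finite set and every set containing $1$ is $\tau$-compact, so the only measure vanishing on all compacts is $m=0$, whereas $(c_0(\tau))^\perp$ contains $m_s\neq 0$; with that description of the second summand, the claimed splitting $\mathrm{ba}(\N)=\ell^1\oplus(c_0(\tau))^\perp$ does not even hold algebraically. The proposition is true precisely when $\tau$ is discrete, in which case it is the classical fact that $c_0$ is M-embedded and your construction $g=z\,\mathbf{1}_{K_\varepsilon}$ works verbatim because $K_\varepsilon$ is finite; indeed it is known (see \cite{harmand2006m}) that $C_0(\Omega)$ is M-embedded if and only if the locally compact Hausdorff space $\Omega$ is discrete. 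So your closing instinct --- that ``essentially all the work lies'' in reconciling the continuity of $g$ with the arbitrariness of $z$ --- was exactly right; the correct conclusion is that this reconciliation is impossible, and no proof of the statement in this generality can exist.
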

\begin{proof}
	 We have the decomposition $ba(\N)=\ell^1\oplus (c_0(\tau))^\perp$, where $(c_0(\tau))^\perp$ is the subset of $ba(\N)$ consisting of those functions that vanish on all compacts of $\tau$. We need to show that it is an $\ell^1$ decomposition, i.e. for every decomposition $m=\xi\delta+m_s$ of $m\in ba(\N)$, where $\delta$ indicates the counting measure on $\N$, $\xi=\{\xi_j\}_{j\in\N}\in\ell^1$ and $m_s\in (c_0(\tau))^\perp$, we have $\|m\|_{ba(\N)}=\|\xi\|_{\ell^1}+\|m_s\|_{ba(\N)}$.\\
	Without loss of generality, we can assume that $m$, $\xi$ and $m_s$ are non-negative, since we can decompose every element $\mu\in ba(\N)$ in its positive and negative part $\mu=\mu_+ -\mu_-$, and we have $\|\mu\|_{ba(\N)}=\|\mu_+\|_{ba(\N)}+\|\mu_-\|_{ba(\N)}=\mu_+(\N)+\mu_-(\N)$ \cite{yosida1952finitely}. It is also not hard to show that if $\mu\in\ell^1$ ($\mu\in c_0(\tau)^\perp$ respectively) then $\mu_+$ and $\mu_-$ are also in $\ell^1$ ($c_0(\tau)^\perp$ respectively).\\
	Since for every $\tau$-compact $K$ we have $m_s(K)=0$, we can write
	\begin{align*}
		\|m\|_{ba(\N)}&=m(\N)=m(K)+m(\N\backslash K)\\
		&=\sum_{j\in K}\xi_j+\sum_{j\in \N\backslash K}\xi_j+m_s(\N\backslash K)\\
		&=\sum_{j\in K}\xi_j+\sum_{j\in \N\backslash K}\xi_j+\|m_s\|_{ba(\N)}.
	\end{align*}
	Now, for all $\varepsilon>0$ there exists a finite subset $K$ of $\alpha$ such that $\sum_{j\in K}\xi_j\ge\|\xi\|_{\ell^1}-\varepsilon$, and since all finite subsets of $\N$ are compact for $\tau$, by taking $\varepsilon\to 0$ we obtain $\|m\|_{ba(\N)}=\|\xi\|_{\ell^1}+\|m_s\|_{ba(\N)}$, concluding the proof.
\end{proof}
We can use this to obtain the following result, which will be useful to obtain examples.
\begin{prop}\label{littleo}
	Let $E$ be a Banach space, $\widetilde{\cF}\subset E^*$ a family of linear forms on $E$ equipped with a locally compact, Hausdorff and separable topology $\tilde{\tau}$. Assume that $\widetilde{\cF}$ is norming for $E$, that $\mathbb B_E$ is $\sigma(E,\widetilde{\cF})$-compact and that the maps $f\in\cF\mapsto\langle f,x\rangle_E$ are $\tilde{\tau}$-continuous for all $x\in E$.\\
	Let $E_0=\left\{y\in E:\displaystyle\limsup_{\substack{f\in\widetilde{\cF}\\f\to\infty}}\left|\langle f,y\rangle_E\right|=0\right\}$, where $f\to\infty$ is intended in the one point compactification of $\tilde{\tau}$. Then $E_0=E_{c_0(\tau)}(\cF)$, where $\cF$ is a countable dense subset of $\widetilde{\cF}$ and $\tau$ is the topology induced by $\tilde{\tau}$ on $\cF$.
\end{prop}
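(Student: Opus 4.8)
The plan is to unravel both sides into a single statement about the vanishing at infinity of the continuous function $g_x\colon f\mapsto\langle f,x\rangle_E$, and then to compare this vanishing on the ambient space $(\widetilde{\cF},\tilde{\tau})$ with the vanishing on the dense subspace $(\cF,\tau)$. First I would fix $x\in E$ and note that, since the evaluation $g_x$ is $\tilde{\tau}$-continuous on $\widetilde{\cF}$ by hypothesis, its restriction to $\cF$ is $\tau$-continuous; identifying $\cF$ with $\N$ through the indexing $f=f_n$, the sequence $V(x)=\{\langle f_n,x\rangle_E\}_n$ is therefore automatically continuous on $(\N,\tau)$. Recalling that $c_0(\tau)=C_0(\N,\tau)$, we get that $x\in E_{c_0(\tau)}$ precisely when $g_x|_{\cF}$ vanishes at infinity on $(\cF,\tau)$, i.e. $\inf_K\sup_{\cF\setminus K}|g_x|=0$ over $\tau$-compact $K\subset\cF$, whereas by definition $x\in E_0$ precisely when $g_x$ vanishes at infinity on $(\widetilde{\cF},\tilde{\tau})$. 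Thus the whole proposition reduces to the equivalence of these two vanishing conditions, and the remaining work is purely topological.

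For the inclusion $E_{c_0(\tau)}\subseteq E_0$ I would exploit density together with continuity. The key observation is that for every $\tau$-compact $K\subset\cF$ --- which, compactness being intrinsic, is also $\tilde{\tau}$-compact and hence $\tilde{\tau}$-closed in the Hausdorff space $\widetilde{\cF}$ --- the set $\widetilde{\cF}\setminus K$ is open, so the dense set $\cF$ meets it in $\cF\setminus K$, which is dense in $\widetilde{\cF}\setminus K$; by continuity of $g_x$ this gives $\sup_{\widetilde{\cF}\setminus K}|g_x|=\sup_{\cF\setminus K}|g_x|$. Hence any $\tau$-compact $K$ making the right-hand side smaller than $\varepsilon$ is a $\tilde{\tau}$-compact set making the left-hand side smaller than $\varepsilon$, so vanishing at infinity on $\cF$ forces vanishing at infinity on $\widetilde{\cF}$.

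The reverse inclusion $E_0\subseteq E_{c_0(\tau)}$ is the delicate one, and I expect it to be the main obstacle. Here I would start from the standard characterization that $g_x$ vanishing at infinity on $\widetilde{\cF}$ is equivalent to the superlevel set $S_\varepsilon:=\{f\in\widetilde{\cF}:|g_x(f)|\ge\varepsilon\}$ being $\tilde{\tau}$-compact for every $\varepsilon>0$, and I would need to deduce that $S_\varepsilon\cap\cF=\{f\in\cF:|g_x(f)|\ge\varepsilon\}$ is $\tau$-compact. The natural tool is that a dense, locally compact subspace of a Hausdorff space is open, so $\cF$ is $\tilde{\tau}$-open and $B:=\widetilde{\cF}\setminus\cF$ is $\tilde{\tau}$-closed; if the $\tilde{\tau}$-closure of $S_\varepsilon\cap\cF$ avoided $B$, it would be a closed subset of the compact set $S_\varepsilon$, hence $\tau$-compact, and we would be done. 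The genuine difficulty is precisely to control the boundary $B$: a priori a point of $B$ at which $|g_x|\ge\varepsilon$ could be a $\tilde{\tau}$-limit of points of $\cF$, in which case $S_\varepsilon\cap\cF$ fails to be $\tau$-compact even though $S_\varepsilon$ is $\tilde{\tau}$-compact. Overcoming this amounts to checking that the canonical map between the one-point compactifications $\cF^{+}\to\widetilde{\cF}^{+}$ is a homeomorphism --- equivalently, that every $\tilde{\tau}$-compact set meets $\cF$ in a $\tau$-compact set --- and I would spend the bulk of the effort verifying this compatibility from the way $\cF$ sits densely in $\widetilde{\cF}$, after which the identification $E_0=E_{c_0(\tau)}$ follows at once from the two superlevel-set descriptions.
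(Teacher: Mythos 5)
Your reduction of both memberships to vanishing-at-infinity statements for the continuous function $g_x\colon f\mapsto\langle f,x\rangle_E$, and your proof of the inclusion $E_{c_0(\tau)}\subseteq E_0$, are correct (and more careful than what the paper itself records for this step): a $\tau$-compact $K\subset\cF$ is $\tilde{\tau}$-compact, hence $\tilde{\tau}$-closed, so $\cF\setminus K$ is dense in the open set $\widetilde{\cF}\setminus K$ and continuity gives $\sup_{\widetilde{\cF}\setminus K}|g_x|=\sup_{\cF\setminus K}|g_x|$.

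The reverse inclusion is a genuine gap, and it cannot be filled along the lines you propose, because the compatibility you plan to verify is false under the stated hypotheses. First, nothing in the assumptions makes $(\cF,\tau)$ locally compact, so the tool you invoke (a dense locally compact subspace of a Hausdorff space is open) has no footing. Second, whenever $\widetilde{\cF}$ has no isolated points --- which is the case in all of the paper's motivating examples, e.g.\ the $BMO$ example --- no $\tilde{\tau}$-compact set $K$ with nonempty interior can meet $\cF$ in a $\tau$-compact set: $K\cap\cF$ is countable and dense in $\mathrm{int}(K)$, so if it were compact it would be $\tilde{\tau}$-closed and would contain $\mathrm{int}(K)$, which is uncountable by Baire category; this is exactly the familiar fact that $[0,1]\cap\mathbb{Q}$ is not compact. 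Moreover, no alternative argument can succeed either, because the literal statement fails in these cases: if $V(x)\in C_0(\N,\tau)$, pick for $\varepsilon>0$ a $\tau$-compact $K\subset\cF$ with $|g_x|<\varepsilon$ on $\cF\setminus K$; the open set $U_\varepsilon=\{f\in\widetilde{\cF}:|g_x(f)|>\varepsilon\}$ cannot contain any point of the dense set $\cF\setminus K$, hence $U_\varepsilon\subset K$, and an open subset of a locally compact Hausdorff space without isolated points that is contained in a countable set must be empty; letting $\varepsilon\to 0$ and using that $\widetilde{\cF}$ is norming forces $x=0$. So under the literal reading $E_{c_0(\tau)}=\{0\}$, while $E_0=VMO$ in the $BMO$ example. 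You have therefore isolated precisely the soft spot of the proposition, but the honest conclusion is that it must be reformulated rather than proved: the paper's own proof does not address this point either, as it merely asserts that $\tau$ is locally compact (false, by the above) and that the identification then follows easily from Section \ref{maintheory}. Any repair has to change the meaning of $E_{c_0(\tau)}$ --- for instance, defining it via sequences on $\cF$ that extend to elements of $C_0(\widetilde{\cF},\tilde{\tau})$ --- rather than via $\tau$-compact subsets of $\cF$.
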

\begin{proof}
	We start by noticing that $\mathbb B_E$ is $\sigma(E,\cF)$-compact, $\tau$ is locally compact and Hausdorff, and by continuity we also have that $\cF$ is norming for $E$. We can thus apply the theory of section \ref{maintheory}, and we easily get that $E_0=E_{c_0(\tau)}(\cF)$.
\end{proof}
\section{examples}\label{examples}
	We start by stating a lemma, which will help prove that $\mathbb B_E$ is $\sigma(E,\cF)$-compact in many cases.
	\begin{lem}\label{refl}
		Let $X$ be a reflexive Banach space, $\cF\subset X^*$ and $E=\{x\in X:\|x\|_E:=\sup_{f\in\cF}|\langle f,x\rangle_X|<\infty\}$. Assume $E$ is continuously embedded in $X$. Then $\mathbb B_E$ is $\sigma(E,\cF)$-compact.
	\end{lem}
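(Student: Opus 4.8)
The plan is to establish compactness through nets: a topological space is compact precisely when every net admits a subnet converging to a point of the space, so it suffices to take an arbitrary net $\{x_\lambda\}$ in $\mathbb B_E$ and produce a subnet that $\sigma(E,\cF)$-converges to some limit still lying in $\mathbb B_E$. The central idea is to borrow the compactness from the reflexive ambient space $X$ and then transfer it down to the coarser topology $\sigma(E,\cF)$.

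First I would use the continuous embedding $E\hookrightarrow X$ to control the net inside $X$: there is a constant $c>0$ with $\|x\|_X\le c\|x\|_E$, so every $x_\lambda\in\mathbb B_E$ satisfies $\|x_\lambda\|_X\le c$, i.e. the net lies in the ball $c\,\mathbb B_X$. Since $X$ is reflexive, $c\,\mathbb B_X$ is weakly compact, and hence there is a subnet $\{x_{\lambda_\mu}\}$ converging weakly in $X$ to some $x\in X$; in particular $\langle g,x_{\lambda_\mu}\rangle_X\to\langle g,x\rangle_X$ for every $g\in X^*$, and \emph{a fortiori} for every $f\in\cF\subset X^*$.

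Next I would verify that the weak limit $x$ belongs to $\mathbb B_E$. For each fixed $f\in\cF$ we have $|\langle f,x_{\lambda_\mu}\rangle_X|\le\|x_{\lambda_\mu}\|_E\le 1$, directly from the definition of $\|\cdot\|_E$, so passing to the limit gives $|\langle f,x\rangle_X|\le 1$. Taking the supremum over $f\in\cF$ yields $\|x\|_E\le 1<\infty$, which shows $x\in E$ and indeed $x\in\mathbb B_E$. This lower-semicontinuity step is the heart of the matter, and it is precisely where the definition of the norm on $E$ as a supremum over $\cF$ is used. Finally, since $\sigma(E,\cF)$-convergence means exactly testing against the functionals of $\cF$, the convergence $\langle f,x_{\lambda_\mu}\rangle_X\to\langle f,x\rangle_X$ for all $f\in\cF$ established above says precisely that $x_{\lambda_\mu}\to x$ in $\sigma(E,\cF)$, which completes the argument.

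The only genuine obstacle is ensuring that the limit extracted from the weak topology of $X$ actually lands back inside $E$ with the correct norm bound; this is handled by the semicontinuity estimate above and requires nothing beyond the defining formula for $\|\cdot\|_E$. A minor point worth recording is that $\sigma(E,\cF)$ is coarser than the restriction to $E$ of the weak topology of $X$, so it is legitimate to deduce $\sigma(E,\cF)$-convergence from weak convergence in $X$; no separate Hausdorff or separation hypothesis on $\cF$ is needed for the compactness conclusion.
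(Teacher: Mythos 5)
Your proof is correct and takes essentially the same route as the paper's: both use the continuous embedding to place $\mathbb B_E$ inside a ball of the reflexive space $X$, invoke weak compactness of that ball, use the supremum definition of $\|\cdot\|_E$ (tested against $f\in\cF$) to keep the limit point inside $\mathbb B_E$, and conclude via the fact that $\sigma(E,\cF)$ is coarser than the weak topology of $X$ restricted to $E$. The only difference is packaging: you argue with nets and convergent subnets, while the paper argues that $\mathbb B_E$ is a weakly closed subset of a weakly compact ball.
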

	\begin{proof}
		Without loss of generality, we assume that $E$ is densely contained in $X$. Since $E\hookrightarrow X$, there exists $\lambda>0$ such that $\mathbb B_E\subset \lambda \mathbb B_X$, which implies that $\mathbb B_E$ has weakly compact closure in $X$. But $\mathbb B_E$ is closed in $X$: for every net $\{x_\alpha\}_{\alpha\in A}\subset \mathbb B_E$ converging to $\tilde{x}\in X$ and for every $f\in\cF$ we have $\langle f,x_\alpha\rangle_X\to\langle f,\tilde{x}\rangle_X$, which implies that $|\langle f,\tilde{x}\rangle_X|\le 1$, and since $|\langle f,x\rangle_X|<\infty$ if and only if $x\in E$ we have $\tilde{x}\in \mathbb B_E$, which implies that $\mathbb B_E$ is weakly compact in $X$, and in particular $\sigma(E,\cF)$-compact in $E$.
	\end{proof}
	Let us denote by $Q_0$ the unit cube $[0,1]^d$ in $\R^d$.
\begin{exmp}\label{BMO}
	Consider the space \cite{john1961functions}
	\begin{equation*}
		BMO(Q_0)=\left\{u\in L^1(Q_0)/\R:[u]_*:=\sup\limits_{Q\subset Q_0}\fint_Q|u(x)-u_Q|dx<\infty\right\},
	\end{equation*}
	where $Q$ varies between all cubes contained in $Q_0$ and $u_I=\displaystyle\fint_I u(x)dx:=\displaystyle\frac{1}{|I|}\int_I u(x)dx$.\\
	Let $\widetilde{\mathcal Q}$ be the family of all cubes contained in $Q_0$ and $\mathcal{Q}$ the subset of rational cubes. We can consider the bijection between $\widetilde{\mathcal Q}$ and $\overset{\circ}{Q_0}\times (0,1]$: if $s(x)$, for $x\in\overset{\circ}{Q_0}$, defines the maximum side length of a cube centered in $x$, we associate to $(x_0,t)\in\overset{\circ}{Q_0}\times(0,1]$ the cube $q(x_0,t)$ centered in $x_0$ with side length $s(x_0)t$. This map induces a topology on $\widetilde{\mathcal Q}$ for which $\mathcal{Q}$ is dense\\
	Let $\mathbb B=\mathbb B_{L^\infty(Q_0)}$ and consider the family $\tilde{\cF}=\{\phi_{Q,h}\}_{I\in\mathcal Q,h\in \mathbb B}$, where $\phi_{Q,h}$ is defined as
$$ \phi_{Q,h}: u\in BMO(Q_0)/\R\mapsto\fint_Q (u(x)-u_Q)h(x)dx\in\R.$$
Let us endow $\mathbb B$ with the weak topology on $L^2(Q_0)$: this choice makes $\mathbb B$ Hausdorff and separable, and since $\mathbb B$ is closed and bounded in $L^2$, it is also compact. As a consequence, we can endow $\widetilde{\cF}$ with the topology induced by the topologies on $\widetilde{\mathcal Q}$ and $\mathbb B$.
	
	We now verify the conditions to apply Proposition \ref{littleo}.\\
	\textbf{1. $\widetilde{\cF}$ is norming for $BMO(Q_0)$.} This is just a consequence of the fact that $\|f\|_{L^1}=\displaystyle\sup_{\substack{g\in L^\infty\\\|g\|_{L^\infty}\le 1}}\displaystyle\int fg$ and the definition of $BMO(Q_0)$.\\
	\textbf{2. The unit ball of $BMO(Q_0)$ is $\sigma(BMO(Q_0),\widetilde{\cF})$-compact.} This can be shown by applying Lemma \ref{refl} with $X=L^2(Q_0)/\R$. In this case $X^*=L^2_0(Q_0)$, the space of $L^2(Q_0)$ functions having zero integral on $Q_0$. Seeing $\phi_{Q,h}$ as acting on $L^1(Q_0)/\R$, we have the canonical identification $\phi_{Q,h}\sim\frac{\chi_Q}{|Q|}h-\int_Q h\,dx$, which is easily seen to be in $L^2(Q_0)$.\\
	\textbf{3. The map $\phi\mapsto\langle\phi,u\rangle_{BMO(Q_0)}$ is continuous for all $u\in BMO(Q_0)$.} We first note that the map $Q\mapsto |Q|$ is continuous, and for fixed $u\in L^2(Q_0)$ we have 
		$$\left|\int_{Q_1}u(x)\,dx-\int_{Q_2}u(x)\,dx\right|\le\int_{Q_1\Delta Q_2}|u(x)|\,dx\le|Q_1\Delta Q_2|^{1/2}\|u\|_{L^2(Q_0)},$$

	so that the map $Q\mapsto\int_Q u(x)\,dx$, and as a consequence the map $Q\mapsto u_Q$, is continuous. Let us now fix $u\in BMO(Q_0)\subset L^2(Q_0)/\R$, $Q_1,Q_2\in\mathcal{Q}$, $h_1,h_2\in \mathbb B_{L^\infty(Q_0)}$ and compute:
	
\begin{equation*}
\begin{split}
	&\left|\langle\phi_{Q_1,h_1}-\phi_{Q_2,h_2},u\rangle_{BMO(Q_0)}\right|\\
	&=\left|\fint_{Q_1} (u(x)-u_{Q_1})h_1(x)dx-\fint_{Q_2} (u(x)-u_{Q_2})h_2(x)dx\right|\\
	&\le\left|\fint_{Q_1} (u(x)-u_{Q_1})h_1(x)dx-\fint_{Q_1} (u(x)-u_{Q_1})h_2(x)dx\right|\\
	&+\left|\fint_{Q_1} (u(x)-u_{Q_1})h_2(x)dx-\fint_{Q_1} (u(x)-u_{Q_2})h_2(x)dx\right|\\
	&+\left|\fint_{Q_1} (u(x)-u_{Q_2})h_2(x)dx-\fint_{Q_2} (u(x)-u_{Q_2})h_2(x))dx\right|\\
	&=\left|\fint_{Q_1} (u(x)-u_{Q_1})(h_1(x)-h_2(x))dx\right|+|u_{Q_1}-u_{Q_2}|\left|\fint_{Q_1}h_2(x)dx\right|\\
	&+\left|\fint_{Q_1} u(x)h_2(x)dx-\fint_{Q_2} u(x)h_2(x)dx\right|\\
	&\le \left|\fint_{Q_1} (u(x)-u_{Q_1})(h_1(x)-h_2(x))dx\right|+|u_{Q_1}-u_{Q_2}|+|[uh_2]_{Q_1}-[uh_2]_{Q_2}||,
	\end{split}
	\end{equation*}
	
	where $u(x)-u_{Q_1}\in L^2(Q_0)$ because of the John-Nirenberg inequality, so that the first term goes to zero as $h_1-h_2\rightharpoonup 0$ in $L^2(Q_0)$, while the second and the third can be made arbitrarily small as the cubes $Q_1$ and $Q_2$ are close in the topology on $\mathcal{Q}$; as a result of this, our claim is proven.\\
	We thus obtain that
	$$
	E_0=VMO(Q_0):=\left\{v\in BMO(Q_0):\limsup_{|I|\to 0}\fint_I|v(x)-v_I|dx=0\right\}
	$$
	is the \lq\lq little o \rq\rq to $BMO(Q_0)$ (for the theory of o-O spaces see \cite{perfekt2015weak}, \cite{perfekt2017m},\cite{ascione2020structure}). Using the fact that $\text{cl}_{BMO(Q_0)}{C^\infty(Q_0)}=VMO(Q_0)$ (\cite{Garnett}, Chapter VI), one can show the (AP) property by convolution with suitable functions (\cite{d2020atomic}), and with that we obtain many classically known results about $VMO(Q_0)$ (\cite{sarason1975functions}, \cite{perfekt2013duality}) such as the M-embedding of this space and the distance formula
	\begin{equation*}
		\mathrm{dist}_{BMO(Q_0)}(u,C^\infty(Q_0))=\limsup_{|I|\to 0}\fint_I|u(x)-u_I|dx.
	\end{equation*}
\end{exmp}

\begin{rmk}
In conclusion, following \cite{CW}, we notice that $\mathcal H^1$ is one of the few examples of a separable, non reflexive space which is a dual space.
\end{rmk}

\begin{exmp}
	The space $B(Q_0)$, introduced in \cite{bourgain2015new} (see also \cite{ABBF}), is defined as
	\begin{equation}
		B(Q_0)=\left\{u\in L^1(Q_0):[u]:=\sup_{0<\varepsilon\le 1}[u]_\varepsilon<\infty\right\},
	\end{equation}
	where $[u]_\varepsilon=\sum_{\mathcal P\subset\mathcal F_\varepsilon}\left(\varepsilon^{d-1}\sum_{Q\in\mathcal P}\fint_Q|u(x)-u_Q|dx\right)$ and $\mathcal F_\varepsilon$ is the set of all unions of at most $\varepsilon^{1-d}$ disjoint cubes of side length $\varepsilon$. It is associated with the closed subspace
	$$B_0(Q_0):=\left\{u\in B(Q_0):\limsup_{\varepsilon\to 0}[u]_\varepsilon=0\right\},$$
	which can be regarded as the \lq\lq little o\rq\rq space corresponding to $B$. In \cite{bourgain2015new} it was proven, among other things, that $(B_0(Q_0))^{**}\cong B(Q_0)$, and additional properties were shown in \cite{d2020atomic}, including a description of the predual of $B$ via atomic decomposition.\\
	It is not hard to extend the constrution for $BMO$ case to $B$, and the results from sections \ref{maintheory} and \ref{preduals} hold with little changes.
\end{exmp}

\begin{exmp}
	Let $(K,d)$ be a compact metric space. We define the Lipschitz space
	\begin{equation*}
		\mathrm{Lip}(K,d)=\left\{u\in C(K):[u]_{\mathrm{Lip}(K,d)}:=\sup_{x\ne y\in K}\frac{|u(x)-u(y)|}{d(x,y)}<\infty\right\}
	\end{equation*}
	and its subspace
	\begin{equation*}
		\mathrm{lip}(K,d)=\left\{v\in\mathrm{Lip}(K,d):\limsup_{d(x,y)\to 0}\frac{|v(x)-v(y)|}{d(x,y)}=0\right\}.
	\end{equation*}
	Particular cases of this spaces are the H\"older spaces $C^{0,r}(K)=\mathrm{Lip}(K,d_{\mathrm{eucl}^r})$ with $K$ a compact subset of $\R^n$ and $0<r\le 1$, with their vanishing subsets $c^{0,r}(K)$. We remark that $c^{0,1}(K)=\mathrm{lip}(K)\equiv\R$.\\
	The quantity $[\cdot]_{\mathrm{Lip}(K,d)}$ is a seminorm on $\mathrm{Lip}(K,d)$, and there are many equivalent ways to turn it into a norm: in particular, we choose to restrict $\mathrm{Lip}(K,d)$ to the space of functions vanishing in a fixed point $P$. The resulting space is a Banach space.\\
	Let $K_s$ be a countable dense of subset of $K$ (it exists because compact metric spaces are totally bounded) and consider $\cF=\left\{\phi_{x,y}=\frac{\delta_x-\delta_y}{d(x,y)}:x,y\in K_s,x\ne y\right\}$, where $\delta_x(u)=u(x)$. $\cF$ is obviously norming for $\mathrm{Lip}(K,d)$, and it can be easily proven that $B_{\mathrm{Lip}(K,d)}$ is $\sigma(\mathrm{Lip}(K,d),\cF)$-compact: it is enough to show that $B_{\mathrm{Lip}(K,d)}$ is $\sigma(\mathrm{Lip}(K,d),\Delta)$-compact, where $\Delta=\{\delta_x:x\in K\}$, but $\sigma(\mathrm{Lip}(K,d),\Delta)$-convergence is just pointwise convergence, so by applying the Ascoli theorem to $\mathbb B_{\mathrm{Lip}(K,d)}$ we easily prove the claim.\\
	To apply Proposition \ref{littleo}, we just give $\cF$ the topology induced by $K^2\backslash\text{diag}(K^2)$; with this topology, the assumptions can be easily shown. Using this construction, the corresponding little space is $\mathrm{lip}(K,d)$.\\
	There is a wide literature that studied conditions equivalent to the property $(\mathrm{lip})^{**}\cong \mathrm{Lip}$ \cite{kalton2004spaces,hanin1994isometric}: in particular, for a compact space it turns out that those conditions are equivalent to the AP property, and that distances of the form $d^r$ with $0<r<1$ have this property. In this case we obtain the results about M-embedding and the distance formula. This can thus be seen as a generalization of \cite{angrisani2019duality} (see \cite{ambrosio2016linear} for connection with Kantorovich-Rubinstein transport distance) .
\end{exmp}
\begin{exmp}\label{BVexample}
	Let us go back to the space $BV(Q_0)$.\\
	For every $(d+1)$-tuple $(\varphi_{0},\varphi_{1},\dots,\varphi_{d})\in C_0(Q_0)^{d+1}$ we can define the linear action
	\begin{equation*}
	u\in BV(Q_0)\mapsto\int_{Q_0}u\varphi_{0}dx+\sum_{i=1}^{d}\int_{Q_0}u\frac{\partial\varphi_{i}}{\partial x_i}dx,
	\end{equation*}
	which is just another way to obtain the construction in \eqref{BVexample0}.\\
	Let now $\Phi=\{\varphi_j\}_{j\in\N}$ be a countable dense subset of $\{\varphi\in C_c^\infty(Q_0):\|\varphi\|_\infty\le 1\}$, and consider $\cF:=\Phi^{d+1}$. From the definition itself of $BV(Q_0)$, $\cF$ is norming, and since $W(Q_0):=\text{cl}_{(BV(Q_0)^*)}{span(\cF)}=\{\varphi+\mathrm{div}\phi:\varphi\in C_0(Q_0),\phi\in C_0(Q_0;\R^d)\}$ is precisely the (vector) space obtained in \eqref{BVexample1} by construction and it automatically satisfies the assumptions of Theorem \ref{predualquotient}.\\
	However, Proposition \ref{Mid} does not apply to $BV(Q_0)$; more specifically, it is possible to prove the following:
	\begin{thm}\label{notunique}
		The space $W(Q_0)$ is not a strongly unique predual.
	\end{thm}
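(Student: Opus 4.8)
The plan is to work with the second, equivalent formulation of strong uniqueness recalled before the statement: $W$ is a strongly unique predual of $BV(Q_0)$ if and only if every closed subspace $P\subset W^{**}$ with $P^*\cong W^*\cong BV(Q_0)$, via the pairing induced by $BV(Q_0)$, coincides with the canonical copy of $W$; equivalently, the canonical projection of $W^{***}$ onto $W^*$ with kernel $W^\perp$ is the only norm one projection with weak-star closed kernel. Hence it is enough to produce \emph{one} further isometric predual $P\subset W^{**}$ with $P\ne W$, or, dually, one weak-star closed complement $N\ne W^\perp$ of $BV(Q_0)$ in $W^{***}=BV(Q_0)^{**}$ carrying a norm one projection. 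By the Dixmier--Kaijser criterion of the Introduction, such a $P$ is an admissible predual exactly when it norms $BV(Q_0)$ and $\mathbb B_{BV(Q_0)}$ is $\sigma(BV(Q_0),P)$-compact; the content of the theorem is that this position is not forced.

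I would start from the concrete model of Example \ref{BVexample}: the embedding $V\colon BV(Q_0)\to \mathcal M(Q_0)^{d+1}$, $Vu=(u\mathcal L^d,D_1u,\dots,D_du)$, realizes $BV(Q_0)$ as a weak-star closed subspace of the dual of $Z=C_0(Q_0)^{d+1}$, with $W=Z/{}^\perp V(BV(Q_0))$. The decisive structural input is the very reason why Proposition \ref{Mid} fails here: since $Q_0$ is not discrete, $C_0(Q_0)$ is \emph{not} $M$-embedded, whereas its dual $\mathcal M(Q_0)$ is $L$-embedded, with the band decomposition $\mathcal M(Q_0)^{**}=\mathcal M(Q_0)\oplus_1\mathcal M_s$ into a countably additive part and a purely finitely additive (singular) part. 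Summing over the $d+1$ coordinates exhibits a nontrivial weak-star closed $L$-summand of $W^{***}=BV(Q_0)^{**}$ that is transverse to the part carrying $BV(Q_0)$, and it is this singular band that will provide the room to re-split the bidual.

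To manufacture the second complement I would exploit this band concretely. Taking a bounded sequence $(u_j)\subset BV(Q_0)$ whose gradients oscillate rapidly, so that $\int_{Q_0}\varphi\,dD_iu_j\to 0$ for every continuous test function while $|Du_j|(Q_0)$ stays bounded below, any $\sigma(BV(Q_0)^{**},BV(Q_0)^*)$-cluster point $\Lambda$ of $(u_j)$ is annihilated by the continuous functionals spanning $W$, hence lies in $W^\perp$, yet is nonzero because it is detected by a singular functional of $BV(Q_0)^*=W^{**}$. Such a $\Lambda$ witnesses that $W^\perp$ meets the singular band nontrivially, and along this direction one tilts $W^\perp$ to a graph-type, still weak-star closed, complement $N\ne W^\perp$ of $BV(Q_0)$; because the modification lives inside the $\oplus_1$ summand it does not increase norms, so the associated projection still has norm one. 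Dually $P={}^\perp N\subset BV(Q_0)^*$ is then a second isometric predual with $P\ne W$, which is all that is required.

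The main obstacle is exactly the rigidity bookkeeping that makes the construction legitimate: one must check that the tilted complement $N$ remains weak-star closed and that the tilted projection has norm precisely one---equivalently, that for the perturbed $P$ the ball $\mathbb B_{BV(Q_0)}$ stays $\sigma(BV(Q_0),P)$-compact while $P$ is genuinely different from $W$. This is where the $L$-embeddedness of $\mathcal M(Q_0)$ together with the absolute continuity of the first coordinate $u\mathcal L^d$ (which keeps $BV(Q_0)$ inside the countably additive band and hence transverse to $\mathcal M_s$) is used in an essential way, and it is precisely the feature absent in the $BMO$/$VMO$ situation, where the relevant predual $C=c_0(\tau)$ \emph{is} $M$-embedded and $(\mathrm{AP})$ holds, forcing $\mathcal H^1$ to be strongly unique there.
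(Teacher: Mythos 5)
Your opening reduction is correct: it suffices to produce a closed subspace $P\subset BV(Q_0)^*$ with $P\ne W(Q_0)$ whose dual, under the pairing inherited from $\langle\cdot,\cdot\rangle_{BV(Q_0)}$, is $BV(Q_0)$. But your construction does not get there. A first, repairable, problem is the cluster-point step: if the oscillating gradients are absolutely continuous (e.g. $u_j(x)=j^{-1}\sin(2\pi jx_1)$), then $u_j\to 0$ \emph{weakly} in $BV(Q_0)$, not merely against $W(Q_0)$. Indeed every $F\in BV(Q_0)^*$ extends by Hahn--Banach through the embedding $Vu=(u\mathcal L^d,D_1u,\dots,D_du)$ to a functional on $\mathcal M(Q_0)^{d+1}$, and on the absolutely continuous band of $\mathcal M(Q_0)$ any such functional acts as an $L^\infty$ function, so Riemann--Lebesgue gives $F(u_j)\to 0$; hence \emph{all} weak-star cluster points are $0$. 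To get $\Lambda\ne 0$ you need jump-type oscillations (say $u_j(x)=j^{-1}\lfloor jx_1\rfloor-x_1$), whose gradients converge weak-star but not weakly, the limit being detected by the linear functional $u\mapsto (D_1^su)(Q_0)$. Even after this repair, however, the step only exhibits a nonzero element of $W^\perp$, which is automatic because $W$ is not reflexive; it does no work toward the theorem.

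The genuine gap is the ``tilting'' step, which is the entire content of the proof and is asserted rather than carried out. To replace $W^\perp$ by a graph $N=\{n+Tn:n\in W^\perp\}$ you must produce a bounded operator $T$ from $W^\perp$ into the canonical copy $\iota(BV(Q_0))\subset BV(Q_0)^{**}$ such that (i) $N$ is weak-star closed, which forces $T$ to be weak-star continuous, i.e. an adjoint --- a structural condition you never verify and for which no candidate is offered; and (ii) the projection $\iota(b)+n\mapsto\iota(b)-Tn$ has norm \emph{exactly} one. The natural estimate for (ii), $\|\iota(b)-Tn\|\le\|\iota(b)\|+\|Tn\|\le\|\iota(b)\|+\|n\|$, is useful only if $\|\iota(b)\|+\|n\|\le\|\iota(b)+n\|$, i.e. only if the canonical decomposition $BV(Q_0)^{**}=\iota(BV(Q_0))\oplus W^\perp$ were an $\ell^1$-sum --- that is, if $W(Q_0)$ were M-embedded. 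It is not: $W(Q_0)$ contains an isometric copy of $C_0(Q_0)$, M-embeddedness passes to closed subspaces \cite{harmand2006m}, and $C_0(Q_0)$ is not M-embedded (its dual contains $L^1(Q_0)$, which fails the Radon--Nikod\'ym property, while duals of M-embedded spaces have it); had $W(Q_0)$ been M-embedded, the theorem would anyway follow at once from the result of \cite{harmand2006m} quoted in Section 3. The $\oplus_1$ decomposition you actually invoke, $\mathcal M(Q_0)^{**}=\mathcal M(Q_0)\oplus_1\mathcal M_s$, lives in the wrong space and does not transfer: $V$ is only a $2$-isomorphism, $\|Vu\|\le\|u\|_{BV}\le 2\|Vu\|$, so a norm-one projection upstairs yields only a norm-$\le 2$ projection on $BV(Q_0)^{**}$, and strong uniqueness of preduals is an isometric notion, so a factor $2$ is fatal. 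The paper avoids all of this by a concrete swap rather than an abstract tilt: $C(Q_0)$ embeds isometrically into $BV(Q_0)^*$; inside $C_0(Q_0)\times\{0\}^d\subset W(Q_0)$ one builds an isometric copy $\rho(c_0)$ of $c_0$ from disjointly supported bumps; Sobczyk's theorem gives $W(Q_0)=W'\oplus\rho(c_0)$; and replacing $\rho(c_0)$ by $\rho(C)$, with $C\subset\ell^\infty$ a predual of $\ell^1$ not isomorphic to $c_0$, yields a second predual $P=W'\oplus\rho(C)\ne W(Q_0)$ under the same pairing. To salvage your route you would have to construct the weak-star continuous contraction $T$ explicitly, and nothing in the $L$-embedding of $\mathcal M(Q_0)$ supplies one.
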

	\begin{proof}
		We will show that there exists a closed subspace $P\subset (BV(Q_0))^*$, not coinciding with $W(Q_0)$, such that $P^*\cong BV(Q_0)$ under the duality pairing induced by $\langle\cdot,\cdot\rangle_{BV(Q_0)}$.\\
		We start by noting that $C(Q_0)$, endowed with the supremum norm, is contained isometrically in $(BV(Q_0))^*$: more specifically, for $\Psi\in C(Q_0)$, we can consider the functional
		\begin{equation*}
			u\in BV(Q_0)\mapsto \int_{Q_0} \Psi u\,dx.
		\end{equation*}
		It is easy to show that $\left|\displaystyle\int_{Q_0}\Psi u\,dx\right|\le\|\Psi\|_{C(Q_0)}\|u\|_{L^1(Q_0)}\le\|\Psi\|_{C(Q_0)}\|u\|_{BV(Q_0)}$, so that $\Psi\in (BV(Q_0))^*$, and by testing on constant functions you get $\|\Psi\|_{(BV(Q_0))^*}=\|\Psi\|_{C(Q_0)}$. Moreover, we get that this identification is injective. Similarly, $W(Q_0)$ contains an isometric copy of $C_0(Q_0)$, corresponding to $C_0(Q_0)\times \{0\}^d$.\\
		Now, $C_0(Q_0)$ contains an isometric copy of $c_0$: if $\{K_n\}_{n\in\N}$ is an exhaustion by compacts of $Q_0$, take $S_1:=K_1$, $S_n:=K_n\backslash K_{n-1}$ and $\sigma_n\in C_0(S_n)$, $\|\sigma_n\|_{C_0(S_n)}=1$. We can then consider the map
		\begin{equation*}
			\rho:\xi=\{\xi_n\}_{n\in\N}\in \ell^1\mapsto\sum_{n\in N}\xi_n\sigma_n\in C(Q_0),
		\end{equation*}
		which can be easily seen to be an isometry, and $\rho(c_0)\subset C_0(Q_0)$. A theorem by Sobczyk \cite{sobczyk1941projection} says that if a separable Banach space contains an isometric copy of $c_0$ then it is complemented: in particular, we can write $W(Q_0)=W'\oplus\rho(c_0)$. If we now consider $C$ a predual of $\ell^1$ not isomorphic to $c_0$, then $W'\oplus\rho(C)$, with the norm induced by $(BV(Q_0))^*$ (since $\rho(C)\subset\rho(\ell^\infty)\subset C(Q_0)\subset (BV(Q_0))^*$), is a predual of $BV(Q_0)$ via the canonical duality pairing, i.e. $W(Q_0)$ is not a strongly unique predual.
	\end{proof}
\end{exmp}

\end{document}